\def\mycite#1#2{\cite{#1}(#2)}
\newfont{\cyr}{wncyr10}
\newfont{\cyb}{wncyb10}
\begin{document}

\setlength{\textwidth}{8in}%
\setlength{\topmargin}{-.5in}
\setlength{\textheight}{9in}

\def\cc {{\mathfrak c}}
\def\ii {{\mathfrak i}}
\def\UU {{\mathfrak U}}
\def\CC {{\Bbb C}}
\def\HH {{\Bbb H}}
\def\NN {{\Bbb N}}
\def\PP {{\Bbb P}}
\def\QQ {{\Bbb Q}}
\def\RR {{\Bbb R}}
\def\TT {{\Bbb T}}
\def\ZZ {{\Bbb Z}}
\def\sA {{\mathcal A}}
\def\sB {{\mathcal B}}
\def\sC {{\mathcal C}}
\def\sD {{\mathcal D}}
\def\sE {{\mathcal E}}
\def\sF {{\mathcal F}}
\def\sG {{\mathcal G}}
\def\sH {{\mathcal H}}
\def\sI {{\mathcal I}}
\def\sJ {{\mathcal J}}
\def\sK {{\mathcal K}}
\def\sL {{\mathcal L}}
\def\sM {{\mathcal M}}
\def\sN {{\mathcal N}}
\def\sO {{\mathcal O}}
\def\sP {{\mathcal P}}
\def\sQ {{\mathcal Q}}
\def\sR {{\mathcal R}}
\def\sS {{\mathcal S}}
\def\sT {{\mathcal T}}
\def\sU {{\mathcal U}}
\def\sV {{\mathcal V}}
\def\sW {{\mathcal W}}
\def\sX {{\mathcal X}}
\def\sY {{\mathcal Y}}
\def\sZ {{\mathcal Z}}
\def\od {\mathrm{od}}
\def\cf {\mathrm{cf}}
\def\dom {\mathrm{dom}}
\def\id {\mathrm{id}}
\def\int {\mathrm{int}}
\def\cl {\mathrm{cl}}
\def\Hom {\mathrm{Hom}}
\def\ker {\mathrm{ker}}
\def\log {\mathrm{log}}
\def\nwd {\mathrm{nwd}}

\def\TT{{\Bbb T}}
\def\Z{{\Bbb Z}}
\def\ZZ{{\Bbb Z}}

\newcommand{\s }{\mathcal }
\newcommand{\bA}{\mathbf{A}}
\newcommand{\bB}{\mathbf{B}}
\newcommand{\bC}{\mathbf{C}}
\newcommand{\bD}{\mathbf{D}}
\newcommand{\bI}{\mathbf{I}}
\newcommand{\bE}{\mathbf{E}}
\newcommand{\bK}{\mathbf{K}}
\newcommand{\bT}{\mathbf{T}}

\newtheorem{thm}{Theorem}[section]

\newtheorem{theorem}[thm]{Theorem}
\newtheorem{corollary}[thm]{Corollary}
\newtheorem{lemma}[thm]{Lemma}
\newtheorem{claim}[thm]{Claim}
\newtheorem{axiom}[thm]{Axiom}
\newtheorem{conjecture}[thm]{Conjecture}
\newtheorem{conventions}[thm]{Conventions}
\newtheorem{fact}[thm]{Fact}
\newtheorem{hypothesis}[thm]{Hypothesis}
\newtheorem{assumption}[thm]{Assumption}
\newtheorem{proposition}[thm]{Proposition}
\newtheorem{criterion}[thm]{Criterion}
\newtheorem{definition}[thm]{Definition}
\newtheorem{definitions}[thm]{Definitions}
\newtheorem{discussion}[thm]{Discussion}
\newtheorem{example}[thm]{Example}
\newtheorem{notation}[thm]{Notation}
\newtheorem{remark}[thm]{Remark}
\newtheorem{remarks}[thm]{Remarks}
\newtheorem{problem}[thm]{Problem}
\newtheorem{terminology}[thm]{Terminology}
\newtheorem{question}[thm]{Question}
\newtheorem{questions}[thm]{Questions}
\newtheorem{notation-definition}[thm]{Notation and Definitions}
\newtheorem{acknowledgement}[thm]{Acknowledgement}

\title{Some Classes of Minimally Almost Periodic Topological groups
\footnote{This paper derives from and extends selected portions of the Doctoral
Dissertation \cite{gouldphd}, written at Wesleyan University (Middletown, Connecticut, USA)
by the second-listed co-author under the guidance of the first-listed co-author.}}

\author{W.W. Comfort\footnote{Department of Mathematics and Computer Science,
 Wesleyan University, Wesleyan Station,
Middletown, CT 06459; phone: 860-685-2632; FAX 860-685-2571
Email: wcomfort@wesleyan.edu}, 
Franklin R. Gould\footnote{Department of Mathematical
Sciences, Central Connecticut State University, New Britain, 06050;
Email: gouldfrr@ccsu.edu}}

\maketitle

\begin{abstract}
A topological group $G=(G,\sT)$ has the {\it small subgroup generating
property}
(briefly: {\it has the SSGP property}, or
is {\it an SSGP group}) if for each neighborhood $U$ of
$1_G$ there is a family $\sH\subseteq\sP(U)$
of subgroups of $G$ such that
$\langle\bigcup\sH\rangle$
is dense in $G$. The class of $\rm{SSGP}$ groups is defined and 
investigated with respect
to the properties usually studied by topologists (products,
quotients, passage to dense subgroups, and the like), and with respect to
the familiar class of minimally almost 
periodic groups (the m.a.p. groups). Additional classes 
SSGP$(n)$ for $n<\omega$ (with 
SSGP$(1) = {\rm SSGP}$) are defined and investigated, and
the class-theoretic inclusions

SSGP$(n)\subseteq{\rm SSGP}(n+1)\subseteq{\rm m.a.p.}$\\
\noindent are established and shown proper.

In passing the authors also establish the
presence of SSGP$(1)$ or SSGP$(2)$ in many of 
the early examples in the literature of abelian m.a.p. groups.
\end{abstract}

{\sl 2010 Mathematics Subject Classification: Primary 54H11;
Secondary 22A05}

{\sl Key words and phrases: {\rm SSGP} group, m.a.p. group,
f.p.c. group}

\maketitle

\section{Introduction}
\begin{conventions}
{\rm
(a) For $X$ a set we write $\sP(X):=\{A:A\subseteq X\}$. This is {\it the
power set} of $X$.

(b) The topological spaces we hypothesize, in particular our
hypothesized topological groups, are
assumed to be completely regular and Hausdorff ({\it i.e.}, to be
Tychonoff spaces). When a topology is defined or constructed
on a set or a group, the Tychonoff property will be verified
explicitly 
(if it is not obvious). In this context we recall (\mycite{hri}{8.4}) that
in order that a topology with continuous algebraic operatons on a group
be a Tychonoff topology, it suffices that it satisfy the Hausdorff
separation property.

(c) For $X$ a space and $x\in X$ we write

$\sN_X(x):=\{U\subseteq X: U$ is a neighborhood of $x\}$.\\
\noindent When ambiguity is unlikely we write $\sN(x)$ in place of $\sN_X(x)$.

(d) The identity of a group $G$ is denoted $1$ or $1_G$; if $G$ is known
or assumed to be abelian and additive notation is in play, the identity
may be denoted $0$ or $0_G$.

(e) When $G$ is a group and $\kappa\geq\omega$, we use the notations $\bigoplus_\kappa\,G$ and $G^{(\kappa)}$ interchangeably:

$\bigoplus_\kappa\,G=G^{(\kappa)}:=
\{x\in G^\kappa:|\{\eta<\kappa:x_\eta\neq1_G\}|<\omega\}$.\\
\noindent When $G$ is a topological group, $\bigoplus_\kappa\,G$ has the topology inherited from $G^\kappa$.
}
\end{conventions}

The {\it minimally almost periodic}
groups (briefly: the m.a.p. groups)
to which our title refers are by definition those topological
groups $G$
for which every continuous homomorphism $\phi:G\rightarrow K$
with $K$ a compact group satisfies $\phi[G]=\{1_K\}$.
It follows from the Gel$'$fand-Ra\u{\i}kov Theorem \cite{gelrai} (see
\cite{hri}(\S22) for a detailed development and proof) that every compact
group $K$ is algebraically and topologically a subgroup of a group of the
form $\Pi_{i\in I}\,U_i$ with each $U_i$ a (finite-dimensional) unitary
group~\cite{hri}(22.14). Therefore, to check that a topological group $G$
is m.a.p. it suffices to
show that each continuous homomorphism $\phi:G\rightarrow U(n)$ with
$U(n)$ the $n$-dimensional unitary group satisfies
$\phi[G]=\{1_{U_n}\}$. Similarly, since every compact abelian group $K$ is
algebraically and topologically isomorphic to a subgroup of a
group of the form
$\TT^I$~\cite{hri}(22.17), to check that an abelian
topological group $G$ is m.a.p., it suffices to
show that each continuous homomorphism $\phi:G\rightarrow\TT$ satisfies
$\phi[G]=\{1_{\TT}\}$. 

Sometimes for convenience we denote by m.a.p. the (proper) class of
m.a.p. groups, and if $G$ is a m.a.p.
group we write $G\in$ {\rm m.a.p.}. Similar conventions apply to the classes SSGP$(n)$
($0\leq n<\omega$) defined in Definition~\ref{defSSGPn}.

Algebraic characterizations of those abelian groups which admit an m.a.p. group
topology has been achieved only recently~\cite{diksha14}. For a brief
historical account of
the literature touching this issue, see Discussion~\ref{history} below.

\begin{acknowledgement}
{\rm
We gratefully acknowledge helpful comments received from
Dieter Remus, 
Dikran Dikranjan, and Saak Gabriyelyan. Each of them improved the
exposition in a pre-publication version of this
manuscript, and enhanced our historical commentary with additional
bibliographic references.
}
\end{acknowledgement}

\section{SSGP Groups: Some Generalities}

\begin{definition}\label{topgen}
{\rm
Let $G=(G,\sT)$ be a topological group and let $A\subseteq G$. Then
$A$ {\it topologically generates} $G$ if
$\langle A\rangle$ is dense in $G$.
}
\end{definition}

\begin{definition}\label{defSSGP}
{\rm
Let $G=(G,\sT)$ be a topological group. Then
$G$ has the {\it small subgroup generating property} if for every
$U\in\sN(1_G)$ there is a family $\sH$ of subgroups of $G$
such that $\sH\subseteq\sP(U)$ and $\bigcup\sH$
topologically generates $G$.

}
\end{definition}
A topological group with the small subgroup generating property is said
to {\it have the {\rm SSGP} property}, or to {\it be an {\rm SSGP} group}, or
simply to {\it be} SSGP.

Now for $0\leq n<\omega$ the classes SSGP$(n)$
are defined as follows.

\begin{definition}\label{defSSGPn}
{\rm
Let $G=(G,\sT)$ be a Hausdorff topological group. Then

(a) $G\in{\rm SSGP}(0)$ if $G$ is the trivial group.

(b) $G\in{\rm SSGP}(n+1)$ for $n \ge 0$ if for every $U\in\sN(1_G)$ 
there is a family $\sH$ of subgroups of $G$ such that \\
\hspace*{1cm} (1) $\sH\subseteq\sP(U)$, \\
\hspace*{1cm} (2) $H:=\overline{\langle \bigcup\sH \rangle}$ is normal in $G$, and\\
\hspace*{1cm} (3) $G/H\in{\rm SSGP}(n)$.
}
\end{definition}

\begin{remarks}\label{inclusions}
{\rm
(a) For $0\leq n<\omega$ the class-theoretic inclusion
${\rm SSGP}(n)\subseteq{\rm SSGP}(n+1)$ holds, hence
${\rm SSGP}(n)\subseteq{\rm SSGP}(m)$ when $n<m<\omega$. To see this,
note that when
$G\in{\rm SSGP}(n)$ and $U\in\sN(1_G)$ then we have, taking
$\sH:=\{\{1_G\}\}$, that $H:=\overline{\langle\bigcup\sH\rangle}=\{1_G\}$
and $G/H\simeq G\in{\rm SSGP}(n)$, so indeed $G\in{\rm SSGP}(n+1)$.

(b) Clearly the class SSGP of Definition~\ref{defSSGP}
coincides with the class SSGP$(1)$ of Definition~\ref{defSSGPn}.
}
\end{remarks}

A topological group $G$ is said to be {\it precompact} if $G$ is a
(dense) topological subgroup of a compact group. It is a theorem of
Weil~\cite{weili} that a topological group $G$ is precompact if and
only if $G$ is {\it totally bounded} in the sense that for each
$U\in\sN(1_G)$ there is finite $F\subseteq G$ such that $G=FU$. 

It is obvious that a precompact group $G$ with $|G|>1$ is not m.a.p. Indeed
if $G$ is dense in the compact group $\overline{G}$ then the continuous function
$\id:G\hookrightarrow\overline{G}$ does not satisfy $\id[G]=\{1_{\overline{G}}\}$.

\begin{theorem}\label{SSGPvsmap}
The class-theoretic inclusion ${\rm SSGP}(n)\subseteq{\rm m.a.p.}$
holds for each $n<\omega$.
\end{theorem}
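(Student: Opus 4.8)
The plan is to argue by induction on $n$, exploiting the reduction recorded just before the theorem: to certify that a group is m.a.p.\ it suffices to show that every continuous homomorphism into some unitary group $U(m)$ has trivial image. The base case $n=0$ is immediate, since by Definition~\ref{defSSGPn}(a) the class ${\rm SSGP}(0)$ consists only of the trivial group, which is vacuously m.a.p. For the inductive step I would assume ${\rm SSGP}(n)\subseteq{\rm m.a.p.}$ and deduce ${\rm SSGP}(n+1)\subseteq{\rm m.a.p.}$

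The crucial ingredient is that the compact Lie group $U(m)$ has \emph{no small subgroups}: there is $V\in\sN(1_{U(m)})$ such that $\{1_{U(m)}\}$ is the only subgroup of $U(m)$ contained in $V$. Granting this, fix $G\in{\rm SSGP}(n+1)$ and a continuous $\phi:G\rightarrow U(m)$, and set $U:=\phi^{-1}(V)\in\sN(1_G)$. By Definition~\ref{defSSGPn}(b) there is a family $\sH\subseteq\sP(U)$ of subgroups of $G$ such that $H:=\overline{\langle\bigcup\sH\rangle}$ is normal in $G$ and $G/H\in{\rm SSGP}(n)$. For each $S\in\sH$ the image $\phi[S]$ is a subgroup of $U(m)$ lying inside $V$, hence $\phi[S]=\{1_{U(m)}\}$; thus $\phi$ kills $\bigcup\sH$, and by multiplicativity it kills $\langle\bigcup\sH\rangle$, and then by continuity (as $\{1_{U(m)}\}$ is closed) it kills the closure $H$. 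Hence $H\subseteq\ker\phi$, so $\phi$ factors as $\phi=\overline{\phi}\circ\pi$ for a homomorphism $\overline{\phi}:G/H\rightarrow U(m)$, where $\pi:G\rightarrow G/H$ is the canonical map; this $\overline{\phi}$ is continuous because $\pi$ is an open quotient map. By the inductive hypothesis $G/H$ is m.a.p., so $\overline{\phi}$ has trivial image, whence $\phi[G]=\overline{\phi}[G/H]=\{1_{U(m)}\}$, as required.

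The main obstacle---indeed essentially the only nonroutine point---is the no-small-subgroups property of $U(m)$ invoked above. I would justify it by the standard fact that $U(m)$, being a compact Lie group, admits a neighborhood of the identity on which the exponential map is a diffeomorphism onto a ball; any nontrivial element in a sufficiently small such ball, under repeated multiplication, eventually leaves a slightly smaller ball, so no nontrivial subgroup can be confined to $V$. Everything else is bookkeeping: the factoring of $\phi$ through $G/H$ together with the continuity of $\overline{\phi}$ is the usual universal property of topological quotient groups (available because $H$ is closed and normal and $\pi$ is open), and the induction simply threads these facts through the tower of quotients built into the definition of ${\rm SSGP}(n+1)$.
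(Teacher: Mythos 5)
Your proof is correct and follows essentially the same route as the paper's: induction on $n$, choosing a no-small-subgroups neighborhood $V$ in $U(m)$, pulling it back to get $U$, observing that $\phi$ annihilates every subgroup in $\sH$ and hence the closed subgroup $H$, and then factoring $\phi$ through $G/H$ to invoke the inductive hypothesis. The only difference is cosmetic: you spell out the justification of the no-small-subgroups property of $U(m)$ and the continuity of the induced map $\overline{\phi}$, both of which the paper simply takes as standard.
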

\begin{proof}
The proof is by induction on $n$.
Clearly if $G\in{\rm SSGP}(0)$ and $\phi\in\Hom(G,U(m))$
then $\phi[G]=\{1_{U(m)}\}$, so $G\in{\rm m.a.p.}$
Suppose now that SSGP$(n)\subseteq {\rm m.a.p.}$, let $G$ be a
topological group such that $G\in$ SSGP$(n+1)$, and let
$\phi:G\rightarrow U(m)$ be a continuous homomorphism.
Choose $V \in \sN(1_{U(m)})$ so that $V$ contains
no subgroups of $U(m)$ other than $\{1_{U(m)}\}$.  Then
$U:= \phi^{-1}[V]\in\sN(1_G)$, and $\phi$ maps every subgroup of $U$ to 
$1_{U(m)}$.   Let $\sH \subseteq \sP(U)$ be a family of 
subgroups of $G$ such that $H:=\overline{\langle\sH\rangle}$ is 
normal in $G$ and such that $G/H\in$ SSGP$(n)$.  
Since a homomorphism maps subgroups to subgroups we have 
$\phi[H] = \{1_{U(m)}\}$.  It follows that $\phi$ defines a continuous 
homomorphism $\widetilde{\phi}: G/H \rightarrow U(m)$ (given by
$\widetilde{\phi}(xH):=\phi(x)$).  By the induction 
hypothesis, $\widetilde{\phi}$ is the trivial homomorphism, so $\phi$ is 
trivial as well; the relation $G\in{\rm m.a.p.}$ follows.
\end{proof}

Now in \ref{opensubnotSSGP}--\ref{admit} we clarify what is and 
is not known about the classes of groups mentioned in 
Theorem~\ref{SSGPvsmap}.  We begin with a simple lemma 
and a  familiar definition.

\begin{lemma}\label{opensubnotSSGP}
Let $G$ be a nontrivial (Hausdorff) topological group for which
some $U \in \sN(1_G)$ contains no subgroup other than 
$\{1_G\}$. Then there is no $n<\omega$ such that $G\in\,${\rm SSGP}$(n)$.
\end{lemma}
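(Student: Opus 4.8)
The plan is to argue by induction on $n<\omega$ that no nontrivial Hausdorff group $G$ admitting a neighborhood $U\in\sN(1_G)$ with no subgroup other than $\{1_G\}$ can lie in ${\rm SSGP}(n)$. The base case $n=0$ is immediate: membership in ${\rm SSGP}(0)$ forces $G$ to be the trivial group, contrary to hypothesis.

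The key observation I would isolate first concerns the special neighborhood $U$. Suppose $\sH$ is any family of subgroups of $G$ with $\sH\subseteq\sP(U)$. Then each member of $\sH$ is a subgroup of $G$ contained in $U$, and by hypothesis the only such subgroup is $\{1_G\}$; hence $\sH\subseteq\{\{1_G\}\}$ and $\bigcup\sH\subseteq\{1_G\}$. Consequently $\langle\bigcup\sH\rangle=\{1_G\}$, and since $G$ is Hausdorff the singleton $\{1_G\}$ is closed, so $H:=\overline{\langle\bigcup\sH\rangle}=\{1_G\}$ and the quotient $G/H$ is canonically (topologically) isomorphic to $G$ itself. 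In other words, the quotient construction in Definition~\ref{defSSGPn}(b), when applied to this particular $U$, cannot make $G$ any smaller.

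For the inductive step, I would assume the statement at level $n$ and suppose toward a contradiction that $G\in{\rm SSGP}(n+1)$. Applying Definition~\ref{defSSGPn}(b) to the special $U$ produces a family $\sH\subseteq\sP(U)$ with $H:=\overline{\langle\bigcup\sH\rangle}$ normal in $G$ and $G/H\in{\rm SSGP}(n)$; by the key observation $H=\{1_G\}$, so $G\simeq G/H\in{\rm SSGP}(n)$. But $G$ is still nontrivial and still carries the same neighborhood $U$ containing no nontrivial subgroup, so the inductive hypothesis forbids $G\in{\rm SSGP}(n)$ — a contradiction. Hence $G\notin{\rm SSGP}(n+1)$, completing the induction.

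I do not anticipate a genuine obstacle: the whole argument rests on the single remark that a subgroup contained in a subgroup-free neighborhood must be trivial, which immediately collapses each admissible quotient back to $G$. The only points demanding care are the use of Hausdorffness to pass from $\langle\bigcup\sH\rangle=\{1_G\}$ to $H=\{1_G\}$ (so that the closure does not enlarge the trivial subgroup), and the observation that the hypothesis on $U$ transfers verbatim to the isomorphic copy $G/H$, which is exactly what allows the induction to iterate.
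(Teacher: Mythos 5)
Your proof is correct and is essentially the paper's own argument: both hinge on the observation that the subgroup-free neighborhood $U$ forces $\sH\subseteq\{\{1_G\}\}$, hence $H=\overline{\langle\bigcup\sH\rangle}=\{1_G\}$ and $G/H\simeq G$, so membership in ${\rm SSGP}(n+1)$ would descend to ${\rm SSGP}(n)$. The paper phrases this as a minimal-counterexample argument while you run a straightforward induction, but these are the same proof in dual form; your explicit remarks on Hausdorffness and on the hypothesis transferring to $G/H\simeq G$ are fine points the paper leaves implicit.
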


\begin{proof}
Clearly $G\notin{\rm SSGP}(0)$.  Suppose there is a 
minimal $n>0$ such that $G\in{\rm SSGP}(n)$, and let
$U\in\sN(1_G)$  be as
hypothesized. Then the only choice for $\s H \subseteq \sP(U)$ is 
$\sH:=\{\{1_G\}\}$, yielding $H = \overline{ \langle \cup \sH \rangle} = \{1_G\}$.  
Thus, $G/H = G \in$ SSGP$(n-1)$, which contradicts the 
assumption that $n$ is minimal.
\end{proof}

\begin{definition}\label{defcogen}
{\rm
Let $G$ be a group and let $1_G\notin C\subseteq G$. Then $C$ {\it
cogenerates} $G$ if every subgroup $H$ of $G$ such that $|H|>1$
satisfies $H\cap C\neq\emptyset$.
}
\end{definition}

\begin{theorem}\label{finitelycogennotSSGP}
Let $G$ be a nontrivial finitely cogenerated topological group.
Then there is no
$n<\omega$ such that $G\in$ {\rm SSGP}$(n)$.
\end{theorem}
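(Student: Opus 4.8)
The plan is to reduce the assertion to Lemma~\ref{opensubnotSSGP}, which already rules out membership in ${\rm SSGP}(n)$ for every $n < \omega$ once a nontrivial group is known to possess a neighborhood of $1_G$ containing no nontrivial subgroup. So the entire task reduces to manufacturing, from the hypothesis of finite cogeneration, a neighborhood $U \in \sN(1_G)$ whose only subgroup is $\{1_G\}$.

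First I would unwind the hypothesis: to say that $G$ is finitely cogenerated is to say that there is a finite set $C$ cogenerating $G$ in the sense of Definition~\ref{defcogen}, so that $1_G \notin C$ and every subgroup $H$ of $G$ with $|H| > 1$ meets $C$. The key observation is that, since $G$ is Hausdorff (hence $T_1$) and $C$ is finite, the set $U := G \setminus C$ is open, and $1_G \in U$ because $1_G \notin C$; thus $U \in \sN(1_G)$.

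Next I would check that $U$ contains no subgroup other than $\{1_G\}$: if $H \subseteq U$ were a subgroup with $|H| > 1$, then cogeneration would force $H \cap C \neq \emptyset$, contradicting $H \subseteq U = G \setminus C$. With $G$ nontrivial and $U$ as produced, Lemma~\ref{opensubnotSSGP} applies directly and yields the conclusion that no $n < \omega$ satisfies $G \in {\rm SSGP}(n)$.

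I do not expect a serious obstacle here; the argument is essentially a one-step translation. The only point that genuinely uses the hypotheses is the claim that $U = G \setminus C$ is a neighborhood of $1_G$, where both the finiteness of $C$ and the $T_1$ separation axiom are indispensable --- for an infinite cogenerating set the complement $G \setminus C$ need not be open, and the reduction to Lemma~\ref{opensubnotSSGP} would break down.
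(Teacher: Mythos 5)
Your proposal is correct and follows the paper's proof essentially verbatim: the paper likewise chooses $U \in \sN(1_G)$ disjoint from the finite cogenerating set $C$ (your explicit choice $U = G \setminus C$, open by $T_1$, being one such), observes $U$ contains no nontrivial subgroup, and invokes Lemma~\ref{opensubnotSSGP}.
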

\begin{proof}
Let $C$ be a finite set of cogenerators for $G$, and choose 
$U \in \sN(1_G)$ such that $U \bigcap C = \emptyset$.  Then $U$ 
contains no subgroup other than $\{1_G\}$, and the statement follows
from Lemma~\ref{opensubnotSSGP}.
\end{proof}

We have noted for every $n<\omega$ the class-theoretic inclusion 
SSGP$(n)\subseteq$ m.a.p.  On the other hand, there are many 
examples of $G\in$ m.a.p. such that $G\in{\rm SSGP}(n)$ for no 
$n<\omega$.  But more is true: There are groups 
which admit an m.a.p. topology which admit an SSGP$(n)$ 
topology for no $n<\omega$.
Indeed from Corollary~\ref{ZnoSSGP} and
Theorem~\ref{finitelycogennotSSGP} respectively
we see that the groups $G=\ZZ$ and
$G=\ZZ(p^\infty)$ (cogenerated by
suitable $C\subseteq G$
with $|C|=p-1<\omega$) admit no
${\rm SSGP}(n)$ topology; while 
Ajtai, Havas, and Koml\'{o}s~\cite{ajtai}, and later 
Zelenyuk and Protasov~\cite{zelenyuk}, have shown the existence of m.a.p. topologies
for $\ZZ$ and for $\ZZ(p^\infty)$.

In Theorem \ref{dont_admit} we show that in the context of abelian groups,
Theorem~\ref{finitelycogennotSSGP} can be strengthened.  We 
use the following basic facts from the theory of abelian groups.  

\begin{lemma}\label{finitelygen1} 
A finitely cogenerated group is the direct sum of finite cyclic 
p-groups and groups of the form $\ZZ({p^\infty})$, hence is torsion
{\rm (\cite{fuchsi}(3.1 and 25.1).} 
\end{lemma}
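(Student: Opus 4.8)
The plan is to recognize this as, in essence, a structure theorem quoted from Fuchs~\cite{fuchsi}, and to organize a self-contained argument around the socle $S:=\bigoplus_p G[p]$, where $G[p]:=\{x\in G:px=1_G\}$. My first task would be to reconcile the paper's cogeneration notion (Definition~\ref{defcogen}) with the classical one by showing that $G$ is finitely cogenerated precisely when $S$ is finite and essential (every nontrivial subgroup meets $S$). For the forward direction, fix a finite cogenerating set $C$. Each subgroup of order $p$ meets every other such subgroup only in $1_G$, so it is pinned to $C$ by a distinct element of order $p$; hence the number of subgroups of order $p$ is at most $|C|$, which forces each $G[p]$ to be finite-dimensional over $\mathbb F_p$ and to be nontrivial for only finitely many primes $p$. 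Thus $S$ is finite.

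Next I would establish that $G$ is torsion, the assertion that yields both essentiality of $S$ and the eventual decomposition. If some $g\in G$ had infinite order, then $\langle g\rangle\cong\ZZ$, and the subgroups $\langle k!\,g\rangle$ for $k<\omega$ would each have to meet the finite set $C$; but any nontrivial element of $\langle k!\,g\rangle$ is a multiple $mg$ with $|m|\ge k!$, so for large $k$ no such element can lie in the finite set $C\cap\langle g\rangle$, a contradiction. Once $G$ is torsion, every nontrivial subgroup contains an element of prime order and hence meets $S$, so $S$ is essential, completing the equivalence of the two notions of finite cogeneration.

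With $G$ torsion and $S$ finite, I would invoke the primary decomposition $G=\bigoplus_p G_p$. Since a nontrivial $p$-component contributes a nontrivial summand to $S$ and $S$ is finite, only finitely many $G_p$ are nontrivial, and each has finite socle $G_p[p]$. The remaining, genuinely substantive, step is the classical structure of a $p$-group $A$ with finite socle: splitting off the maximal divisible subgroup as $A=D\oplus R$, one sees that $D$ is a finite direct sum of copies of $\ZZ(p^\infty)$ (each contributing one line to the finite socle), while the reduced part $R$, having finite socle, must be finite (a basic subgroup of $R$ is then a finite direct sum of finite cyclic groups and exhausts $R$). This last fact, that a reduced $p$-group with finite socle is finite, is exactly the content packaged in~\cite{fuchsi}(25.1), and is where I expect the main obstacle to lie. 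Assembling the pieces then exhibits $G$ as a finite direct sum of finite cyclic $p$-groups and copies of $\ZZ(p^\infty)$, which is in particular torsion, as claimed.
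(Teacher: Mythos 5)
Your proposal is correct, but note that the paper does not actually prove this lemma: it is stated as a quoted fact, with the proof outsourced entirely to Fuchs (\cite{fuchsi}, 3.1 and 25.1), so the honest comparison is between your reconstruction and that citation. Your argument is essentially the standard one underlying the cited result, and you explicitly defer its only deep ingredient---that a reduced abelian $p$-group with finite socle is finite (a basic subgroup has finite socle, hence is finite and bounded, hence is pure and bounded and so splits off with divisible, hence trivial, complement)---to the very theorem the paper cites, so the two treatments rest on identical foundations. What your write-up genuinely adds is the bridge the paper skips: the paper's Definition~\ref{defcogen} of cogeneration (every nontrivial subgroup meets a fixed finite set $C$ with $1_G\notin C$) is not verbatim the hypothesis under which Fuchs's theorem is stated, and your two counting arguments---distinct order-$p$ subgroups must contain distinct elements of $C$, forcing the socle to be finite; and an element $g$ of infinite order would yield nontrivial subgroups $\langle k!\,g\rangle$ eventually missing the finite set $C\cap\langle g\rangle$, forcing $G$ to be torsion---are exactly what is needed to apply that citation legitimately to the paper's notion. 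One caveat applies equally to the lemma as stated and to your proof: everything is tacitly abelian (you use primary decomposition and the splitting off of the maximal divisible subgroup), and without that hypothesis the conclusion is false; for instance the quaternion group $Q_8$ is finitely cogenerated by $\{-1\}$ in the sense of Definition~\ref{defcogen} but is not a direct sum of cyclic $p$-groups and Pr\"ufer groups.
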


\begin{lemma}\label{finitelygen2} 
A finitely generated abelian group is the direct sum 
of cyclic free groups and cyclic torsion groups
{\rm(\cite{fuchsi}(15.5))}.
\end{lemma}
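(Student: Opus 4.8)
The plan is to realize $G$ as a quotient of a finitely generated free abelian group and to read off the decomposition from the structure of the kernel. Since $G$ is finitely generated, say by $g_1,\ldots,g_n$, the homomorphism $\pi:\ZZ^n\to G$ carrying the $i$-th standard generator to $g_i$ is surjective, so $G\simeq\ZZ^n/K$ where $K:=\ker\pi$. It therefore suffices to understand subgroups of $\ZZ^n$ well enough to describe such quotients.

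First I would establish that every subgroup of $\ZZ^n$ is again free abelian of rank at most $n$; this goes by induction on $n$, projecting onto the last coordinate and splitting the short exact sequence so obtained (a subgroup of $\ZZ$ is $\{0\}$ or infinite cyclic, and freeness of the quotient makes the sequence split). In particular $K$ is free of some rank $m\le n$. The heart of the argument is then the stacked-basis (Smith normal form) theorem: there exist a basis $e_1,\ldots,e_n$ of $\ZZ^n$ and positive integers $d_1\mid d_2\mid\cdots\mid d_m$ such that $d_1e_1,\ldots,d_me_m$ is a basis of $K$. One derives this by forming the integer matrix whose columns express a chosen basis of $K$ in the standard basis of $\ZZ^n$ and reducing it to diagonal form by integer row and column operations, which correspond respectively to changes of basis in $K$ and in $\ZZ^n$; a minimal-entry/gcd argument produces the divisibility chain.

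Quotienting then gives
\[
G\simeq\ZZ^n/K\simeq(\ZZ/d_1\ZZ)\oplus\cdots\oplus(\ZZ/d_m\ZZ)\oplus\ZZ^{\,n-m},
\]
which displays $G$ as a direct sum of cyclic torsion groups (the summands $\ZZ/d_i\ZZ$ with $d_i>1$, the trivial ones with $d_i=1$ being discarded) and cyclic free groups (the $n-m$ copies of $\ZZ$), as claimed. The main obstacle is the stacked-basis step: producing the divisibility chain $d_1\mid\cdots\mid d_m$ simultaneously with a compatible basis requires the careful gcd reduction and is the only genuinely nontrivial ingredient, the remaining passages being formal. An alternative route avoiding Smith normal form would first split off a free complement to the torsion subgroup and then decompose the finite torsion part into its primary components and each of those into cyclic groups, but the diagonalization argument is the most direct.
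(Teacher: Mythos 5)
Your argument is mathematically correct: it is the classical invariant-factor proof of the fundamental theorem of finitely generated abelian groups, and the decomposition $\ZZ^n/K\simeq(\ZZ/d_1\ZZ)\oplus\cdots\oplus(\ZZ/d_m\ZZ)\oplus\ZZ^{\,n-m}$ does exhibit $G$ as a direct sum of cyclic torsion groups and cyclic free groups, as the lemma asserts. Be aware, though, that the paper offers no proof of its own here: the lemma is stated as a standard fact with a bare citation to Fuchs (\cite{fuchsi}(15.5)), so there is no step-by-step argument in the paper to compare against. Relative to the cited source, your main route (realize $G$ as $\ZZ^n/K$, prove subgroups of $\ZZ^n$ are free, then diagonalize via the stacked-basis/Smith normal form reduction) is actually the \emph{less} typical path in Fuchs's development; the treatment there runs closer to the alternative you mention at the end, namely splitting $G$ as (free group) $\oplus$ (finite torsion group) using that finitely generated torsion-free groups are free, then decomposing the finite part into primary components and those into cyclic summands. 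The trade-off is as you say: Smith normal form is more computational but delivers the divisibility chain $d_1\mid\cdots\mid d_m$ (the invariant factors) in one stroke, whereas the torsion-splitting route is softer and generalizes more naturally to the surrounding theory of abelian groups. The one place your write-up is only a sketch is exactly the stacked-basis step, which you correctly flag as the sole nontrivial ingredient; for a complete proof you would need to carry out the minimal-entry/gcd reduction in detail, but the outline you give is the standard and correct way to do it.
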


\begin{lemma}\label{finitelygen3}
If $G$ is a finitely generated abelian group and $H$ is a 
torsionfree subgroup then there is a decomposition $G = K \oplus T$ where $T$ 
is the torsion subgroup, $K$ is torsionfree and $H \subseteq K$  
{\rm (\cite{fuchsi}, Chapter III)}.
\end{lemma}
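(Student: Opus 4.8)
The plan is to convert the statement into a splitting problem for the torsion subgroup and then to arrange that the prescribed subgroup $H$ lands inside the chosen complement. First I would set $T$ to be the (canonical) torsion subgroup of $G$ and consider the quotient map $\pi\colon G\to G/T$. By Lemma~\ref{finitelygen2} the group $G/T$ is finitely generated and torsionfree, hence free abelian of some finite rank $r$; since a free abelian group is projective, the extension $0\to T\to G\to G/T\to 0$ splits, and each complement of $T$ in $G$ is exactly the image $s[G/T]$ of a section $s\colon G/T\to G$ of $\pi$. Every such complement $K=s[G/T]$ is torsionfree and satisfies $G=K\oplus T$, so the whole content of the Lemma is that $s$ can be chosen with $H\subseteq K$.

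Next I would use the torsionfreeness of $H$. Because $T$ is precisely the set of torsion elements of $G$, we have $H\cap T=\{0\}$, so $\pi$ carries $H$ isomorphically onto $\overline{H}:=\pi[H]$, a subgroup of the free abelian group $G/T$. Let $\sigma:=(\pi|_H)^{-1}\colon\overline{H}\to G$; this is a homomorphism with $\pi\circ\sigma=\id_{\overline{H}}$ and image $H$, i.e. a partial section. The problem now reads: extend $\sigma$ to a section $s\colon G/T\to G$ of $\pi$ defined on all of $G/T$, for then $K:=s[G/T]$ is a torsionfree complement of $T$ with $s[\overline{H}]=H\subseteq K$. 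To attempt this I would invoke the stacked-basis description of a subgroup of a finite-rank free abelian group: choose a basis $e_1,\dots,e_r$ of $G/T$ and positive integers $d_1\mid\cdots\mid d_k$ so that $d_1e_1,\dots,d_ke_k$ is a basis of $\overline{H}$, then lift each $e_i$ to some $g_i\in G$ and set $s(e_i):=g_i$, trying to pick the lifts so that $d_ig_i=\sigma(d_ie_i)\in H$ for $i\le k$.

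I expect the last compatibility to be the main obstacle. Extending $\sigma$ forces $d_ig_i=\sigma(d_ie_i)$, which demands that the given element of $H$ be divisible by $d_i$ in $G$ through an element lying over $e_i$; there is no reason for such a division to exist once $d_i>1$. The construction runs cleanly exactly when every $d_i$ equals $1$, that is when $\overline{H}$ is a direct summand of $G/T$ --- equivalently, when $H$ is a pure subgroup of $G$ --- in which case $G/T=\overline{H}\oplus C$, and lifting a basis of $\overline{H}$ into $H$ together with arbitrary lifts of a basis of $C$ produces the desired section with $H\subseteq K$ automatically. The delicate point of the argument is therefore the passage from $\overline{H}$ to an honest summand of $G/T$, and in writing the proof I would be careful to verify that the hypotheses actually available (through the structure theory cited in Lemmas~\ref{finitelygen1}--\ref{finitelygen2}) supply the purity of $H$ on which this step depends.
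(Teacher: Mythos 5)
Your reduction is sound as far as it goes, and your closing suspicion is exactly the right one --- but the gap you flagged cannot be closed, because the Lemma as stated is false. Take $G=\ZZ\oplus\ZZ/2\ZZ$, so that $T=\{0\}\oplus\ZZ/2\ZZ$, and let $H=\langle(2,\bar{1})\rangle$; this $H$ is infinite cyclic, hence torsionfree. If $G=K\oplus T$ with $K$ torsionfree, then the quotient map $\pi\colon G\to G/T\cong\ZZ$ restricts to an isomorphism on $K$, so $K=\langle(1,\delta)\rangle$ for some $\delta\in\ZZ/2\ZZ$, and every element of $K$ has the form $(n,n\delta)$, where $n\delta=\bar{0}$ whenever $n$ is even. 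Hence $(2,\bar{1})\notin K$ for either choice of $\delta$: no torsionfree complement of $T$ contains $H$. In your notation this is precisely the case $d_1=2$: the element $\sigma(d_1e_1)=(2,\bar{1})$ admits no division by $2$ through an element lying over $e_1$. So the ``main obstacle'' you identified is not a technical step awaiting verification from the hypotheses; it is fatal, and no proof exists. (One small correction: purity of $H$ is sufficient for your construction but not necessary for the conclusion --- in the same $G$, the subgroup $H=\langle(2,\bar{0})\rangle$ is not pure, yet it lies in the torsionfree complement $\ZZ\oplus\{0\}$ --- so your ``exactly when'' and ``equivalently'' overstate matters; what fails in general is the existence claim itself.)

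For comparison, the paper offers no argument at all: the Lemma carries only the bare citation to \cite{fuchsi}, Chapter III, and what one finds there (the torsion subgroup of a finitely generated group, being bounded, is a direct summand) yields $G=K\oplus T$ but not the extra requirement $H\subseteq K$, which is exactly the false part. The error matters downstream: Theorem~\ref{dont_admit} invokes the Lemma to place $\overline{H}$ inside a torsionfree summand $F''$, and that step is unjustified as written. It can, however, be repaired without the Lemma. Put $S:=\{g\in G: ng\in\overline{H}\mathrm{\ for\ some\ }n\neq 0\}$; then $S$ is pure in $G$ and contains the torsion subgroup $T'$, so $S/T'$ is a pure subgroup, hence a direct summand, of the finitely generated free group $G/T'$, and therefore $G/\overline{H}\cong (S/\overline{H})\oplus C$ with $C$ free of rank $r_0(G)-r_0(\overline{H})<r_0(G)$. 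Moreover $S/\overline{H}$ is torsion and contains a copy of $T'$ with finite quotient; since the minimum condition on subgroups passes to extensions, $S/\overline{H}$ is finitely cogenerated. This exhibits $G/\overline{H}$ in the form (finitely generated) $\oplus$ (finitely cogenerated) with strictly smaller torsionfree rank, which is all the induction in Theorem~\ref{dont_admit} actually needs.
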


\begin{theorem}
\label{dont_admit}
A nontrivial abelian group which is the direct sum of a finitely generated group and a finitely
cogenerated group does not admit an ${\rm SSGP}(n)$ topology for any $n<\omega$.  
\end{theorem}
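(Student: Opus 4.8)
The plan is to put $G$ into a normal form, reduce the free rank by passing to Hausdorff quotients, and bottom out at the two obstructions already settled in the excerpt: finitely cogenerated groups (Theorem~\ref{finitelycogennotSSGP}) and $\ZZ$ (Corollary~\ref{ZnoSSGP}). First I would normalize the group. Writing $G = F \oplus C$ with $F$ finitely generated and $C$ finitely cogenerated, Lemma~\ref{finitelygen2} gives $F \cong \ZZ^k \oplus F_0$ with $F_0$ finite, and Lemma~\ref{finitelygen1} gives that $C$, hence $F_0 \oplus C$, is torsion. Since a finite direct sum of finitely cogenerated groups has finite socle and so is again finitely cogenerated (equivalently, Artinian), I may assume $G \cong \ZZ^k \oplus T$ where $T = t(G)$ is a finitely cogenerated torsion group; nontriviality forces $k \ge 1$ or $T \neq \{0\}$. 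I record for later that the class $\sC$ of all such groups is closed under quotients: in any quotient the torsion subgroup is an extension of a finite group by an Artinian group, hence Artinian, and the torsion-free quotient is free, so the torsion splits off.

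The engine is the fact — which I would record as a lemma and prove by induction on $n$ much as in Theorem~\ref{SSGPvsmap}, using that a quotient homomorphism $q\colon G \to G/N$ by a closed normal $N$ is open, so that $\overline{q[\langle\bigcup\sH\rangle]} = q[\overline{\langle\bigcup\sH\rangle}]$ — that ${\rm SSGP}(n)$ passes to every Hausdorff quotient $G/N$. Two special cases are the leaves of the argument: a nontrivial finite (hence discrete) quotient is impossible by Lemma~\ref{opensubnotSSGP}, and a quotient isomorphic to $\ZZ$ is impossible by Corollary~\ref{ZnoSSGP}. Now suppose $G \in \sC$ is nontrivial and lies in ${\rm SSGP}(n)$ for some $n$, and induct on $k$. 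The base case $k = 0$ is exactly Theorem~\ref{finitelycogennotSSGP}. For $k \ge 1$, pick a generator $v$ of a rank-one free direct summand and set $N := \overline{\langle v\rangle}$. Then $N$ is closed, $G/N \in {\rm SSGP}(n)$, and $G/N$ is a quotient of $G/\langle v\rangle \cong \ZZ^{k-1}\oplus T$; hence $G/N \in \sC$ with free rank at most $k-1$ (the rank drops because $\langle v\rangle \cong \ZZ$ has rank one). If $G/N$ is nontrivial, the inductive hypothesis yields the desired contradiction.

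The main obstacle is precisely the case in which this reduction stalls, namely $G/N = \{0\}$, i.e. the cyclic group $\langle v\rangle \cong \ZZ$ is dense in $G$. Since one is free to take $v$ to be any primitive vector of the free part, the genuinely hard configuration is a Hausdorff topology on $G$ in which every rank-one free subgroup, and as the same phenomenon the whole torsion subgroup $T$, is dense, so that no algebraic projection onto $\ZZ$ is continuous and every such quotient collapses. This is the pathology underlying Corollary~\ref{ZnoSSGP} (already on $\ZZ$ itself a subgroup such as $2\ZZ$ can be dense in a Hausdorff group topology), and it cannot be cleared by the quotient machinery alone.

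To defeat it I would exploit the torsion side. Choosing $U \in \sN(1_G)$ disjoint from a finite cogenerating set $C_T$ of $T$, every subgroup contained in $U$ meets $T$ trivially — a nontrivial torsion element would generate a finite subgroup inside $U$ meeting $C_T$ — hence is torsion-free of rank $\le k$. Thus the subgroups that ${\rm SSGP}$ makes available inside $U$ are torsion-free and transverse to $T$, and Lemma~\ref{finitelygen3} lets me split the torsion off the finitely generated pieces they span. Combining this transversality with the technique behind Corollary~\ref{ZnoSSGP} should show that torsion-free small subgroups cannot topologically generate $G$, contradicting ${\rm SSGP}$ and eliminating the dense-cyclic case. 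This last step — ruling out the dense free-cyclic (equivalently dense-torsion) configuration — is where essentially all of the difficulty lies.
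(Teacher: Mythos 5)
Your normalization ($G\cong\ZZ^k\oplus T$ with $T$ finitely cogenerated torsion), the observation that this class is closed under quotients, the induction on free rank with base case Theorem~\ref{finitelycogennotSSGP}, and the use of Theorem~\ref{prsrv_prop}(b) to push ${\rm SSGP}(n)$ down to Hausdorff quotients all match the skeleton of the paper's proof (which inducts on $r_0(G)$ and uses Lemma~\ref{finitelygen3} to keep quotients in the class). But there is a genuine gap exactly where you place it, and it is not a removable loose end: your reduction quotients by $N:=\overline{\langle v\rangle}$ for a generator $v$ of a free summand chosen \emph{in advance}, and nothing prevents $N=G$. The proposed repair cannot work as stated. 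First, it is circular: in this paper Corollary~\ref{ZnoSSGP} is stated without proof as a \emph{consequence} of Theorem~\ref{dont_admit}, so there is no independent ``technique behind Corollary~\ref{ZnoSSGP}'' to borrow; and for $G=\ZZ$ (which is $k=1$, $T=\{0\}$) the only admissible $v$ is $\pm1$, so $\langle v\rangle=G$ is dense and your unproved case is the \emph{entire} proof. Second, the torsion-transversality idea is vacuous precisely there: when $T=\{0\}$ the condition ``$U$ misses a cogenerating set of $T$'' is empty and ``every subgroup inside $U$ is torsionfree'' carries no information. Third, the statement you hope to extract --- that torsionfree small subgroups cannot topologically generate --- cannot follow from transversality alone: the paper itself notes (remark following Theorem~\ref{prsrv_prop}, citing Glasner) that a monothetic group, i.e.\ one with a \emph{dense} copy of $\ZZ$, can have SSGP; so any argument must exploit the algebraic form $\ZZ^k\oplus T$ of the ambient group, and none is given.

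The single idea you are missing is the paper's ordering of choices: fix the neighborhood \emph{before} the subgroup, and force its closure to miss a designated point. The paper fixes an infinite-order element $a$ and chooses $U\in\sN(0)$ with $a\notin\overline{U}$ and $\overline{U}\cap C=\emptyset$, where $C$ is a finite cogenerating set of the torsion part $T'$. The dichotomy is then on $U$, not on a fixed $v$: either every subgroup contained in $U$ is torsion, hence lies in $T'$, hence is trivial because it misses $C$ --- and Lemma~\ref{opensubnotSSGP} already excludes ${\rm SSGP}(n)$ for all $n$; or $U$ contains some infinite cyclic subgroup $H$, and then $\overline{H}\subseteq\overline{U}$ is \emph{automatically proper} (it misses $a$) and meets $T'$ trivially (it misses $C$), so by Lemma~\ref{finitelygen3} the quotient $G/\overline{H}$ is a nontrivial Hausdorff quotient of the same form with strictly smaller free rank, and induction plus Theorem~\ref{prsrv_prop}(b) finishes. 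In short, properness of the closed subgroup you quotient by cannot be arranged algebraically (density of $\langle v\rangle$ is a real possibility, as you observe); it must be forced topologically by placing the subgroup inside $\overline{U}$ with $a\notin\overline{U}$. With that reordering the dense-cyclic obstruction never arises, and no separate argument for it is needed.
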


\begin{proof}  We proceed by induction on the torsionfree rank, $r_0(G)$.  Suppose first that
$r_0(G)=0$.  Then $G$ is finitely co-generated and does not admit an ${\rm SSGP}(n)$ 
topology by Theorem
\ref{finitelycogennotSSGP}.  Now suppose that the theorem has been proved up to rank $r-1$ 
and we have $r_0(G) = r \ge 1$ and $G = F \oplus T$, with F finitely generated and T finitely 
co-generated.  Using Lemmas \ref{finitelygen1} and \ref{finitelygen2}, 
we rewrite $G$ in the form 
$G = F^\prime \bigoplus T^\prime$ where
$T^\prime$ is the 
(finitely cogenerated) torsion subgroup and
$F^\prime$ is free.
Then $r_0(F^\prime) = r_0(G) = r$.   Let
$a \in F^\prime$ be an element of infinite order
and choose $U \in \s{N}(0)$ so that
$a \notin \overline{U}$ and so that
$\overline{U} \bigcap C = \emptyset$ where $C$
is a finite set of
cogenerators of $T^\prime$ (with $0 \notin C$).
If all subgroups contained in $U$ are torsion,
then each such subgroup is a subgroup of $T^\prime$ and is therefore the 
zero subgroup, since it misses $C$.  In that case, $G$ does not have SSGP.   
Alternatively, if $U$ contains a cyclic subgroup
$H$ of infinite order, we have $r_0(\overline{H}) > 0$.  Furthermore, since
$\overline{H} \subseteq \overline{U}$, we have $\overline{H} \bigcap T^\prime = \{0\}$.  
It follows from Lemma \ref{finitelygen3} that there is a decomposition 
$G = F^{\prime \prime} \bigoplus T^\prime$ 
which is isomorphic to the original decomposition and is
such that $\overline{H} \subseteq F^{\prime \prime}$.  Since a quotient of a finitely 
generated group is also
finitely generated, it follows that $F^{\prime \prime} / \overline{H}$ is finitely generated.  
Then we have $G / \overline{H} = (F^{\prime \prime} / \overline{H} ) \bigoplus T^\prime$.  
We also have that $r_0(G / \overline{H}) < r$ because 
$r_0(G) = r_0(\overline{H}) + r_0(G / \overline{H})$,  
(\cite{fuchsi}(\S16, Ex.~3(d))).    
Also, $G / \overline{H}$ is nontrivial since 
$\overline{H} \subseteq \overline{U}$ and
$a \notin \overline{U}$.  It follows by our induction assumption that 
$G / \overline{H}$ does not admit ${\rm SSGP}(n)$, and so by 
Theorem~\ref{prsrv_prop}(b) (below), neither does $G$.
\end{proof}

\begin{corollary}\label{ZnoSSGP}
The group $\ZZ$ does not admit an SSGP$(n)$
topology for any $n<\omega$.
\end{corollary}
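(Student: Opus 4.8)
The plan is to realize $\ZZ$ as an instance of the hypothesis of Theorem~\ref{dont_admit} and then invoke that theorem directly, so that essentially no new argument is required. Concretely, I would first observe that $\ZZ$ is a nontrivial abelian group that is finitely generated (indeed cyclic, generated by $1$). Writing $\ZZ = \ZZ \oplus \{0\}$ exhibits $\ZZ$ as the direct sum of the finitely generated group $\ZZ$ and the trivial group $\{0\}$, so it remains only to confirm that $\{0\}$ qualifies as a finitely cogenerated group in the sense needed by the theorem.

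For this I would note that the trivial group is vacuously finitely cogenerated: in Definition~\ref{defcogen} the choice $C = \emptyset$ satisfies $0 \notin C$, while the requirement that every subgroup $H$ with $|H| > 1$ meet $C$ holds vacuously, since $\{0\}$ has no subgroup of order greater than $1$; and of course $|\emptyset| < \omega$. (Equivalently, one may view the trivial group as the empty direct sum in Lemma~\ref{finitelygen1}.) With this degenerate case dispatched, $\ZZ$ satisfies all the hypotheses of Theorem~\ref{dont_admit}, and the conclusion that $\ZZ$ admits no SSGP$(n)$ topology for any $n < \omega$ follows at once.

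The main — and essentially the only — point requiring care is the bookkeeping around this degenerate summand, namely verifying that the statement of Theorem~\ref{dont_admit} does not tacitly presuppose the finitely cogenerated factor to be nontrivial. Since it does not, no genuine obstacle arises, and the corollary is immediate. (If one preferred to avoid the empty cogenerating set entirely, an alternative is to note that $r_0(\ZZ) = 1$ places $\ZZ$ squarely in the inductive step of the proof of Theorem~\ref{dont_admit}, with the torsion part $T'$ taken trivial, yielding the same conclusion.)
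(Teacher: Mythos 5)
Your proof is correct and follows exactly the paper's intended route: the corollary is stated immediately after Theorem~\ref{dont_admit} precisely because $\ZZ=\ZZ\oplus\{0\}$ is a nontrivial abelian group that is the direct sum of a finitely generated group and a (vacuously, via $C=\emptyset$) finitely cogenerated group. Your care in checking that the degenerate trivial summand satisfies Definition~\ref{defcogen} is the only point of substance, and you handled it correctly.
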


The following theorem lists several inheritance properties 
for groups in the classes SSGP$(n)$.
 
\begin{theorem}\label{prsrv_prop}
\rm{(a)} If $K$ is a closed normal 
subgroup of $G$, with $K \in$ SSGP$(n)$ 
and $G/K \in$ SSGP$(m)$ then $G \in$ SSGP$(m + n)$.

{\rm (b)} If $G \in$ SSGP$(n)$ and $\pi: G \twoheadrightarrow B$ is a continuous 
homomorphism from $G$ onto $B$, then 
$B \in$ SSGP$(n)$.  In particular, if $K$ is a 
closed normal subgroup of $G\in{\rm SSGP}(n)$ then $G/K \in$ SSGP$(n)$.

{\rm (c)} If $K$ is a dense subgroup of $G$ and
$K \in$ SSGP$(n)$ then 
$G \in$ SSGP$(n)$.

{\rm (d)} If $G_i \in$ SSGP$(n)$ for each $i \in I$ then 
$\bigoplus_{i \in I} G_i \in$ SSGP$(n)$ and
$\prod_{i \in I} G_i \in$ SSGP$(n)$.
\end{theorem}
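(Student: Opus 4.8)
The plan is to prove the four inheritance properties (a)--(d) of Theorem~\ref{prsrv_prop} more or less independently, though (a) and (b) are the structural heart and the others will lean on them. I would begin with the quotient property (b), since it is the most fundamental and is already invoked in the proof of Theorem~\ref{dont_admit}. The natural approach is induction on $n$. For $n=0$ the statement is vacuous or trivial (the trivial group maps onto the trivial group). For the inductive step, given $G\in{\rm SSGP}(n+1)$ and a continuous surjection $\pi:G\twoheadrightarrow B$, I take an arbitrary $V\in\sN(1_B)$, pull it back to $U:=\pi^{-1}[V]\in\sN(1_G)$, and extract from the ${\rm SSGP}(n+1)$ hypothesis a family $\sH\subseteq\sP(U)$ of subgroups with $H:=\overline{\langle\bigcup\sH\rangle}$ normal in $G$ and $G/H\in{\rm SSGP}(n)$. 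Then $\pi[\sH]:=\{\pi[K]:K\in\sH\}$ is a family of subgroups of $B$ contained in $V$, and the continuity and surjectivity of $\pi$ give $\overline{\langle\bigcup\pi[\sH]\rangle}=\overline{\pi[\langle\bigcup\sH\rangle]}\supseteq\pi[H]$, which is normal in $B$. The remaining task is to check that $B/\overline{\pi[H]}\in{\rm SSGP}(n)$, which should follow from the induction hypothesis applied to the induced surjection $G/H\twoheadrightarrow B/\overline{\pi[H]}$.

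For part (a), the idea is again induction, this time on $m$ (the index of the quotient), using the hypothesis $K\in{\rm SSGP}(n)$ as a fixed ``base.'' When $m=0$, $G/K$ is trivial, so $K$ is dense in $G$, and I would deduce $G\in{\rm SSGP}(n)={\rm SSGP}(0+n)$ from the density property (c); this is why I would prove (c) before completing (a). For the inductive step with $G/K\in{\rm SSGP}(m+1)$, given $U\in\sN(1_G)$ I use the ${\rm SSGP}(m+1)$ structure on $G/K$ to produce subgroups of $G/K$ lying in the image of $U$ whose closed generated subgroup $\overline{L}/K$ is normal with $(G/K)/(\overline{L}/K)\cong G/\overline{L}\in{\rm SSGP}(m)$; pulling these subgroups back into $U$ (and appending whatever subgroups of $U$ witness $K\in{\rm SSGP}(n)$) should give a witnessing family for $G$, after which the recursive clause of Definition~\ref{defSSGPn} reduces matters to showing $G/\overline{L}\in{\rm SSGP}(n+m)$, handled by the induction hypothesis applied to the closed normal subgroup $\overline{L}/K$ of $G/K$. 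The bookkeeping of which subgroups land inside the prescribed neighborhood $U$ is the fiddly part here.

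Part (c) I would again prove by induction on $n$: if $K$ is dense in $G$ and $K\in{\rm SSGP}(n+1)$, then for $U\in\sN(1_G)$ I apply the hypothesis to $U\cap K\in\sN(1_K)$ to get subgroups $\sH\subseteq\sP(U\cap K)\subseteq\sP(U)$ with $\overline{\langle\bigcup\sH\rangle}^K$ normal in $K$ and $K/(\cdots)\in{\rm SSGP}(n)$; the closure taken in $G$ of the same subgroups is normal in $G$ by density, and the induced quotient $G/\overline{\langle\bigcup\sH\rangle}^G$ contains a dense copy of the corresponding $K$-quotient, so the induction hypothesis finishes the step. Part (d) then falls out by combining the direct-sum/product structure with the already-established machinery: each factor $G_i$ sits as a closed subgroup, the finite sub-sums are handled by iterating (a), and the full product is obtained from (c) using that $\bigoplus_{i\in I}G_i$ is dense in $\prod_{i\in I}G_i$, so it suffices to establish the claim for the direct sum; a neighborhood $U$ of $1$ in the product depends on only finitely many coordinates, and the ${\rm SSGP}(n)$ witnesses in those coordinates combine to a witness in $U$.

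The main obstacle I anticipate is the careful verification of normality and of the quotient identifications when passing witnesses between $G$, $K$, $B$, and the various closed generated subgroups --- in particular, ensuring that the closure $\overline{\langle\bigcup\sH\rangle}$ is genuinely normal in the larger group and that the ``third isomorphism theorem'' style identifications of iterated quotients respect the topology (so that the induced maps are continuous and the recursive ${\rm SSGP}$ clause applies). The purely algebraic content is routine, but the interaction between taking closures, forming quotients, and preserving the normality required by Definition~\ref{defSSGPn} is where the argument must be made precise.
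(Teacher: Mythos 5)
The decisive problem is in your part (a). Your induction on $m$ requires, at the inductive step, ``pulling back into $U$'' the subgroups of $G/K$ that the hypothesis $G/K\in{\rm SSGP}(m+1)$ provides inside $q[U]$, where $q\colon G\twoheadrightarrow G/K$ is the quotient map. No such pulling back is possible: the $q$-preimage of any subgroup of $G/K$ contains all of $K=q^{-1}(1_{G/K})$, hence is not contained in $U$, and section-type lifts need not exist even algebraically (the only subgroup of $\ZZ(4)$ mapping onto $\ZZ(4)/2\ZZ(4)$ is $\ZZ(4)$ itself). This obstruction---smallness of subgroups in a quotient says nothing about smallness of subgroups upstairs---is precisely why the classes ${\rm SSGP}(n)$ must be defined by the recursion of Definition~\ref{defSSGPn} at all. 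Moreover, even granting such lifts, Definition~\ref{defSSGPn} asks you to show $G/H\in{\rm SSGP}(n+m)$ where $H$ is the closed subgroup generated by \emph{your} family (the lifts together with the witnesses for $K\in{\rm SSGP}(n)$); that subgroup need not contain $K$, hence need not equal your $\overline{L}$ (the preimage of the closed subgroup generated in $G/K$), so the announced reduction to ``$G/\overline{L}\in{\rm SSGP}(n+m)$'' is not the statement the definition requires, and the final appeal to the induction hypothesis ``applied to $\overline{L}/K$'' would in any case yield a conclusion about $G/K$, not about $G/H$. The paper runs the induction on $n$ instead, peeling the first layer off $K$ rather than off $G/K$: given $U\in\sN(1_G)$, take $\sH\subseteq\sP(U\cap K)$ witnessing $K\in{\rm SSGP}(n)$ and put $H:=\overline{\langle\bigcup\sH\rangle}$, so that $K/H\in{\rm SSGP}(n-1)$; then $K/H$ is a closed normal subgroup of $G/H$ with $(G/H)/(K/H)$ topologically isomorphic to $G/K\in{\rm SSGP}(m)$, and the inductive hypothesis of (a) gives $G/H\in{\rm SSGP}(m+n-1)$, hence $G\in{\rm SSGP}(m+n)$. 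That choice of induction variable is not cosmetic; yours cannot be repaired.

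Two smaller points. In (d), ``the finite sub-sums are handled by iterating (a)'' does not serve your purpose: the indices in (a) add, so iteration gives only $G_1\oplus\cdots\oplus G_k\in{\rm SSGP}(kn)$, not ${\rm SSGP}(n)$. The rest of your sketch for (d)---reduce to the dense direct sum via (c), then over the finitely many restricted coordinates combine the witnessing families, taking the full factors $G_i$ in the unrestricted coordinates---is the paper's argument, but it must itself be an induction on $n$, since the quotient of $\bigoplus_{i}G_i$ by the generated closed subgroup is $\bigoplus_{i}(G_i/H_i)$ and one needs (d) at level $n-1$ to place that in ${\rm SSGP}(n-1)$. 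Your parts (b) and (c) are essentially the paper's proofs and are sound, with one gloss in (c): the dense ``copy'' $K\overline{H}/\overline{H}$ of $K/H$ inside $G/\overline{H}$ is a priori only a \emph{continuous} isomorphic image of $K/H$, so you need part (b) (the paper invokes the openness of the first-isomorphism-theorem map $K\overline{H}/\overline{H}\to K/(K\cap\overline{H})$) before the inductive hypothesis of (c) can be applied. All four parts should therefore be organized, as in the paper, as one simultaneous induction on $n$.
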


\begin{proof}
We proceed in each case by induction on $n$. Each statement is trivial when $n=0$. We address (a), (b), (c) and (d) in order, assuming in each case for $1\leq n<\omega$ that the statement holds for $n-1$.

(a) Let $U \in \s N(1_G)$, so that 
$U \cap K \in \s N(1_K)$.  Then there is a family
$\s H \subseteq \sP(U \cap K)$ of subgroups
of $K$ such that $K/H \in$ SSGP$(n-1)$ 
where $H:= \overline{\langle \cup \s H \rangle}^K = 
\overline{\langle \cup \s H \rangle}^G$.  Since
$G/K$ is topologically isomorphic with $(G/H)/(K/H)$,
we have 
$(G/H)/(K/H) \in$ SSGP$(m)$ along with $K/H \in$ SSGP$(n-1)$.  
Then by the induction hypothesis, $G/H \in$ SSGP$(m + n-1)$.  
Since $\s H \subseteq \s P(U)$ with $U$ 
arbitrary, we have $G \in$ SSGP$(m + n)$,
as required.

{\rm (b)} Given $G \in$ SSGP$(n)$ and
continuous $\pi: G \twoheadrightarrow B$,
let $U \in \s N (1_B)$.  Then $\pi^{-1}[U] \in \s N (1_G)$ and there is a 
family $\s H \subseteq \s P (\pi^{-1}[U])$
of subgroups such that 
$G / \overline{\langle \cup \sH \rangle} \in $ SSGP$(n-1)$.  Let $\widetilde{\s H}$ 
be the family of subgroups of $B$ given by $\widetilde{\s H} := \{\pi[L] : L \in \s H \}$.  
Then $\widetilde{\s H} \subseteq \s P (U)$.  
Set $H := \overline{\langle \cup \s H \rangle }$ and set
$\widetilde{H} := \overline{\langle \cup \widetilde{\s H} \rangle}$.  Then $\widetilde{H}$ 
is normal in $B$ since by assumption $H$ is normal in $G$.  
By invoking the induction hypothesis we will show that 
$B/ \widetilde{H} \in$ SSGP$(n-1)$ and thus that
$B \in$ SSGP$(n)$.  
Note that $H \subseteq \pi^{-1}[\widetilde{H}]$ since 
$\langle \cup \s H \rangle \subseteq \pi^{-1} [\langle \cup \widetilde{\s H} \rangle]$ 
and $\pi^{-1}[\widetilde{H}]$ is closed.  We have that
$G/H \in$ SSGP$(n-1)$ so 
by induction,
$(G/H) / \left ( \pi^{-1}[\widetilde{H}]/H \right )
\in$ SSGP$(n-1)$
and this is topologically isomorphic with $G /  \pi^{-1}[\widetilde{H}]$ by the 
second topological isomorphism theorem.  Now, we claim that the algebraic 
isomorphism $\widetilde{\pi}: G /  \pi^{-1}[\widetilde{H}] \rightarrow B/\widetilde{H}$ induced by $\pi$
is continuous (though it may not be open).  Clearly, $\pi$ maps cosets of 
$\pi^{-1}[\widetilde{H}]$ to cosets of $\widetilde{H}$.  If $\widetilde{V}$ is an open union of 
cosets of $\widetilde{H}$, then $\pi^{-1}[\widetilde{V}]$ is an open union of cosets of 
$\pi^{-1}[\widetilde{H}]$ and the claim follows.
Composing maps gives 
a continuous homomorphism from $G/H$ onto
$B/\widetilde{H}$; we 
conclude that $B/ \widetilde{H} \in$ SSGP$(n-1)$
and thus $B \in$ SSGP$(n)$, as required.

\rm{(c)} Given $G$ and $K$ as hypothesized, 
let $U \in \sN (1_G)$.  Since $U \cap K \in \sN(1_K)$, 
there is a family
$\s H \subseteq \sP(U \cap K)$ of subgroups
of $K$ such that 
$K/H  \in$ SSGP$(n-1)$, where
$H :=  \overline{\langle\bigcup\sH \rangle}^K$.
Note that $H = \overline{H}^G \cap K$.  Let 
$\phi: K \overline{H}/ \overline{H} \rightarrow K/(K \cap \overline{H})$ 
be the natural isomorphism from the first (algebraic) isomorphism theorem
for groups.  The corresponding theorem for topological groups says that 
$\phi$ is an open map, i.e., $\phi^{-1}$ is a continuous map.  
Then from part (a) of this theorem, 
$K\overline{H}/\overline{H} \in \rm{SSGP}(n-1)$.  Now
$K\overline{H}/\overline{H} $ is dense in $G/\overline{H}$,
because the subset of $G$ that projects onto the closure of 
$K\overline{H}/\overline{H} $ must be closed and must contain 
$K\overline{H}$.  Then
$G/\overline{H} \in$ SSGP$(n-1)$
by the induction hypothesis. Since 
$\overline{H}^G = \overline{\langle\bigcup\sH \rangle}^G$
we have $G \in$ SSGP$(n)$, as required.  

\rm{(d)}
Since $\bigoplus_{i\in I}\,G_i$ is dense in
$\Pi_{i\in I}\, G_i$, it suffices by part (c) to treat the case
$G:=\bigoplus_{i\in I}\,G_i$. Let $U\in\sN_G(1_G)$,
without loss of generality
with $U = \prod_{i\in I} U_i$ 
where $U_i\in \s N(1_{G_i})$ 
and $U_i = G_i$ for $i > N_U$.  For each $i$
there is a family 
$\s H_i \subseteq \s P(U_i)$ of subgroups of $G_i$
such that $G_i / H_i \in$ SSGP$(n-1)$ where 
$H_i := \overline{\langle \cup \, \s H_i \rangle}$.  Now consider the family of 
subgroups of $G$ given by
$\s H := \{\bigoplus_{i \in I} L_i: L_i \in \s H_i \}$.  
Then $ \s H \subseteq \s P(U)$, $\langle \cup \s H \rangle$ 
is identical to $ \bigoplus_{i \in I} \langle \cup \s H_i \rangle$,
and
$H := \overline{\langle \cup \s H \rangle}$
is identical to $\bigoplus_{i \in I} H_i$.  
We also have that $G / H$ is topologically isomorphic with 
$\bigoplus_{i \in I} G_i /H_i$ (cf. \cite{hri}(6.9)).  
From the induction hypothesis we have
$G/H \in$ SSGP$(n-1)$, so 
$G \in \rm{SSGP}(n)$, as required.
\end{proof}

\begin{remark}
\rm
{
Certain other tempting statements of inheritance
or permanence type,
parallel in spirit to those considered in
Theorem~\ref{prsrv_prop}, do not hold in general. We give some
examples.

(a) We show below, using a construction of Hartman and Mycielski~\cite{hartmyciel58}
and of Dierolf and Warken~\cite{Dierolf-Warken}, that
a closed subgroup of an SSGP group 
may lack the SSGP$(n)$ property for every $n<\omega$.
Indeed, every topological 
group can be embedded as a closed subgroup of an SSGP group
(Theorem~\ref{HMDW}).

(b) The conclusion of part (a) of Theorem~\ref{prsrv_prop}
can fail when $s<m+n$ 
replaces $m+n$ in its statement.  For example, the 
construction used in Lemma \ref{Gn_induct} shows that
a topological group  
$G \notin$ SSGP$(n)$ may have a closed normal subgroup
$K \in$ SSGP$(1)$ 
with also $G/K \in$  SSGP$(n)$, so $s=m+n$ is minimal when $m=1$.  We did not pursue the issue of minimality
of $m+n$ in Theorem~\ref{prsrv_prop}(a)
for arbitrary $m,n>1$.

(c) The converse to Theorem~\ref{prsrv_prop}(c)
can fail.  In \cite{gouldphd} 
a certain monothetic m.a.p. group constructed by Glasner 
\cite{glasner} is shown to have SSGP, but we noted above in
Corollary~\ref{ZnoSSGP} that $\ZZ$ admits an
SSGP$(n)$ topology for no $n<\omega$.

In contrast to that phenomenon, it should be
mentioned that (as has been noted by many authors)
in the
context of m.a.p. groups,
a dense subgroup $H$ of a topological group $G$
satisfies $H\in$ m.a.p. if and only if $G\in$ m.a.p.
Thus in particular in the case of Glasner's monothetic
group, necessarily the dense subgroup $\ZZ$
inherits an m.a.p. topology.
}
\end{remark}

We now restrict our discussion to abelian groups and to 
the class SSGP$ = $SSGP$(1)$, and
examine which specific abelian groups do and do not admit an SSGP
topology.  We have already noted (Theorem \ref{dont_admit}) that the 
product of a finitely cogenerated abelian group with a finitely generated 
abelian group does not admit an SSGP topology even though it may 
admit an m.a.p. topology.  We now give additional examples
of abelian groups 
which admit not only an m.a.p. topology but also an  SSGP topology.

\begin{theorem}\label{admit} The following abelian groups admit 
an {\rm SSGP} topology.

{\rm (a)} $\QQ$, and those subgroups of $\QQ$ in which some primes are 
excluded from denominators, as long as an infinite number 
of primes and their powers are allowed;

{\rm (b)} $\QQ/ \ZZ$ and $\QQ^\prime / \ZZ$ where $\QQ^\prime$ is 
a subgroup of $\QQ$ as in described in {\rm (a)};

{\rm (c)} direct sums of the form $\bigoplus_{i < \omega} \ZZ_{p_i}$ 
where the primes $p_{i}$ all coincide or all differ;

{\rm (d)} $\ZZ^{(\omega)}$ (the direct sum);

{\rm (e)} $\ZZ^\omega$ (the full product);

{\rm (f)} $G^{(\lambda)}$ for $|G|>1$ and $\lambda\geq\omega$;

{\rm (g)} $F^\lambda$ for $1<|F|<\omega$ and
$\lambda\geq\omega$;

{\rm (h)} arbitrary sums and products of groups which admit an 
{\rm SSGP} topology.
\end{theorem}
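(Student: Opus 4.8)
The plan is to isolate one core construction --- a ``convergence in measure'' topology that turns a direct sum $A^{(\omega)}$ into an SSGP group --- and then to derive every item on the list by feeding this construction into the permanence properties of Theorem~\ref{prsrv_prop}: passage to continuous quotients (b), to overgroups of dense subgroups (c), and to arbitrary sums and products (d). Item (h) is then immediate: if each $G_i$ carries an SSGP topology, then equipping each factor with such a topology and applying Theorem~\ref{prsrv_prop}(d) gives SSGP topologies on $\bigoplus_i G_i$ and on $\prod_i G_i$ simultaneously.

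For the core construction, fix a nontrivial $A$ and realize $A^{(\omega)}$ as the group $D$ of dyadic $A$-valued step functions on $[0,1)$ (functions constant on each $[k2^{-n},(k+1)2^{-n})$), topologized by the translation-invariant metric $d(f,g)=\mu\{t:f(t)\neq g(t)\}$, where $\mu$ is Lebesgue measure. Subadditivity of $\mu$ makes this a metrizable, hence Tychonoff, group topology, and $d(f,0)=0$ forces $f=0$, so it is Hausdorff. To see $D$ is SSGP, let $U_\epsilon=\{f:\mu(\mathrm{supp}\,f)<\epsilon\}$ be a basic neighborhood and, for each dyadic interval $J$ with $\mu(J)<\epsilon$, take the ``bump'' subgroup $L_J:=\{a\mathbf{1}_J:a\in A\}\cong A$; each $L_J\subseteq U_\epsilon$, and since any dyadic step function is a finite sum of such bumps (subdivide its intervals below scale $\epsilon$), the family $\sH=\{L_J:\mu(J)<\epsilon\}$ already satisfies $\langle\bigcup\sH\rangle=D$. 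An iteration of the split short exact sequences $0\to A^{2^{n}}\to A^{2^{n+1}}\to A^{2^{n}}\to 0$ (refinement versus ``right-half difference'') identifies $D=\varinjlim A^{2^{n}}$ with $A^{(\omega)}$ algebraically. This settles (d) (take $A=\ZZ$) and the equal-primes case of (c) (take $A=\ZZ_p$); the remaining direct-sum items (f) and (c) then follow by writing $A^{(\lambda)}\cong\big(A^{(\omega)}\big)^{(\lambda)}$ (since $\lambda\cdot\omega=\lambda$) and applying Theorem~\ref{prsrv_prop}(d).

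The torsion, quotient, and product items I would obtain from parts (b) and (c). For the distinct-primes case of (c), the coordinatewise reduction $\ZZ^{(\omega)}\twoheadrightarrow\bigoplus_i\ZZ_{p_i}$ exhibits the target as a quotient of the SSGP group $\ZZ^{(\omega)}$, and the surjection $\QQ\twoheadrightarrow\QQ/\ZZ$ (and $\QQ'\twoheadrightarrow\QQ'/\ZZ$) will give (b) once (a) is in hand; in each case the point to check is that the relevant kernel is closed in the chosen SSGP topology, so that the quotient is Hausdorff and Theorem~\ref{prsrv_prop}(b) applies verbatim. For the full products (e) and (g) the direct-sum construction does not apply directly, and I would instead invoke Theorem~\ref{prsrv_prop}(c). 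Let $L(X,A)$ be the group of all measurable $A$-valued functions on $[0,1)$ modulo null sets, with the same convergence-in-measure topology; the step functions are dense in $L(X,A)$ and, by the bump argument, SSGP, so any subgroup $S$ with $D\subseteq S\subseteq L(X,A)$ is SSGP by Theorem~\ref{prsrv_prop}(c). It therefore suffices to embed the full product as such an intermediate group. For $A=F$ finite this would give (g), since $F^\lambda\cong(F^\omega)^\lambda$ reduces via (d) to $F^\omega$: regarding $L(X,\ZZ_p)$ as a vector space over the field $\ZZ_p$, both $\ZZ_p^\omega$ and $L(X,\ZZ_p)$ have dimension $\mathfrak{c}$ and $D$ already lies in the image, so a basis/dimension count produces an injective homomorphism $\ZZ_p^\omega\hookrightarrow L(X,\ZZ_p)$ with $D\subseteq S$ (the general finite $F$ following from its decomposition into cyclic factors and (d)); for $A=\ZZ$ one seeks the analogous embedding $\ZZ^\omega\hookrightarrow L(X,\ZZ)$.

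The main obstacle is exactly this last embedding, together with the rank-one case (a). For the product (e) the difficulty is that $\ZZ^\omega$ is torsion-free and not free, so the clean basis/dimension argument available over the field $\ZZ_p$ is unavailable over $\ZZ$: one must extend the identification $\ZZ^{(\omega)}\cong D$ across the pure extension $\ZZ^{(\omega)}\subseteq\ZZ^\omega$ into $L(X,\ZZ)$, which requires real information about the algebraic structure of the function group (equivalently, a closed-kernel presentation of $\ZZ^\omega$ as a quotient of a free SSGP group). For (a), $\QQ$ has rank one and hence cannot contain any large step-function subgroup, so none of the sum/product reductions apply; here a bespoke topology must be built by hand, using the hypothesis that infinitely many primes are admitted to supply enough ``small'' subgroups (cyclic or $p$-local subgroups shrinking to $0$) whose union is dense while Hausdorffness is preserved, after which (b) follows by the quotient argument above. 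I expect the construction of the SSGP topology on $\QQ$ and the verification of the $\ZZ^\omega$-embedding (equivalently, the closedness of a kernel) to be the two places where genuine work is required; the remaining items are formal consequences of Theorem~\ref{prsrv_prop}.
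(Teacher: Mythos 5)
Your core construction is exactly the paper's: your group $D$ of dyadic step functions under the convergence-in-measure metric is the paper's $G^*_A$ of Definition~\ref{deirolf-warken_sub} (with $A$ the dyadic rationals), your bump subgroups $L_J$ are the paper's $F(I)$, and your derivations of (d), (f), (h) and the equal-primes case of (c) via Theorem~\ref{prsrv_prop} follow the paper's own logic. Your flagging of (a), (b), (e) as requiring genuine further work is also a fair match: the paper does not prove these in-house either, but outsources (e) to \cite{gould14} and (a), (b) to the dissertation \cite{gouldphd}.

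The genuine gap is the distinct-primes case of (c), which the paper \emph{does} prove in this paper and which your proposed reduction cannot deliver. You want to realize $\bigoplus_{i<\omega}\ZZ_{p_i}$ as a Hausdorff quotient of $\ZZ^{(\omega)}$ (with the measure topology) by the kernel $K=\bigoplus_i p_i\ZZ e_i$, and you correctly identify the point to check: that $K$ is closed. It is not; in fact $K$ is \emph{dense}, for any assignment of the distinct primes $p_i$ to the natural basis of two-step functions. Identify $e_i$ with $f_{a_i}=\mathbf{1}_{[0,a_i)}$ as in Theorem~\ref{DW_sub}(d). Given $i$ and $\epsilon>0$, choose two dyadic points $a_j,a_k$ with $j,k\neq i$ lying within $\epsilon$ below $a_i$ (below $1$, in the case of the constant function $a_i=0$); since $p_j\neq p_k$ there are integers $u,v$ with $up_j+vp_k=1$, and then $up_jf_{a_j}+vp_kf_{a_k}\in K$ equals $1$ on $[0,\min(a_j,a_k))$ and $0$ beyond $\max(a_j,a_k)$, hence agrees with $f_{a_i}$ off a set of measure less than $\epsilon$. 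So $\overline{K}$ contains every basis element, whence $\overline{K}=\ZZ^{(\omega)}$ and the quotient topology on $\bigoplus_i\ZZ_{p_i}$ is indiscrete---useless, since SSGP is defined only for Hausdorff groups. This failure is not incidental: it is precisely why the paper devotes a separate bespoke construction to this case (Theorem~\ref{cyclic_sum_diff}), building a norm on $\bigoplus_i\ZZ_{p_i}$ directly from provisional values $\nu(me_n)=\nu(\widehat{e}_n)=\frac{1}{n}$, with essentially all of the work concentrated in the Hausdorffness verification---exactly the property your quotient cannot supply. (A smaller gap: your dimension count over $\ZZ_p$ for item (g) works only for elementary abelian $F$; for cyclic factors $\ZZ_{p^k}$ with $k\geq2$ it gives nothing. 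The paper instead quotes $F^\lambda\simeq\bigoplus_{2^\lambda}F$ from \cite{fuchsi} and applies (f).)
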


Item (h) is a special case of Theorem~\ref{prsrv_prop}(d),
and item (e) is 
demonstrated in the second author's paper \cite{gould14}.
The ``coincide'' case of item (c) follows from item (f), the
``differ'' case  is estabished below in Theorem~\ref{cyclic_sum_diff}.
Theorem~\ref{DW_sub}((c) and (d)) 
below demonstrates the validity of item (f) for
$\omega\leq\lambda\leq\cc$. This
together with (h) gives 
(d) and (f) in full generality.
Item (g) then follows from the relation
$F^\lambda\simeq\bigoplus_{2^\lambda}\,F$
(\cite{fuchsi}(8.4, 8.5)).
The remaining items are demonstrated 
in \cite{gouldphd}.

There are many 
examples of nontrivial SSGP$(1)$ groups (that is, of SSGP groups).
It has been shown by Hartman and Mycielski~\cite{hartmyciel58}
that every topological group G embeds as a closed subgroup into a
connected, arcwise connected group $G^*$; two decades later Dierolf and
Warken~\cite{Dierolf-Warken}, working independently and without
reference to \cite{hartmyciel58}, found essentially the same embedding
$G\subseteq G^*$ and showed that $G^*\in{\rm m.a.p.}$.
Indeed the arguments of \cite{Dierolf-Warken} show in effect
that $G^*\in{\rm SSGP}$
(of course with property SSGP not yet having been named).
We now describe the construction and we give briefly the relevant argument.

\begin{definition}\label{deirolf-warken}
{\rm
Let $G$ be a Hausdorff topological group.  Then algebraically
$G^*$ is the group of step functions $f: [0,1) \rightarrow G$ with 
finitely many steps, each of the form
$[a,b)$ with $0 \le a < b \le 1$.  
The group operation is pointwise multiplication in $G$.  The
topology $\sT$ on $G^*$ is  
the topology generated by (basic) 
neighborhoods of the identity function $1_{G^*}\in G^*$ of the form
 
$N(U, \epsilon) := \{f \in G^*: \lambda(\{x\in[0,1):f(x)\notin U\})<
\epsilon\}$,\\
\noindent where $\epsilon > 0$, $U \in \sN_G(1_G)$, and $\lambda$ 
denotes the usual Lebesgue measure on $[0,1)$.
}
\end{definition}

\begin{theorem}\label{HMDW}
Let $G$ be a topological group. Then

{\rm (a)} $G$ is closed in $G^*=(G^*,\sT)$;

{\rm (b)} $G^*$ is arcwise connected; and

{\rm (c)} $G^*\in~{\rm SSGP}$.
\end{theorem}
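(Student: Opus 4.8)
The plan is to identify $G$ with the subgroup of constant functions in $G^*$, writing $c_g\in G^*$ for the function with constant value $g$; one checks routinely that $g\mapsto c_g$ is a topological embedding, since (for $\epsilon\leq 1$) we have $c_g\in N(U,\epsilon)$ iff $g\in U$, so the trace of $\sT$ on $\{c_g:g\in G\}$ is the original topology of $G$. For (a) I would then show that the complement of $\{c_g:g\in G\}$ is open. Let $f\in G^*$ be non-constant; since $f$ has finitely many steps, each an interval of positive length, it assumes two distinct values $g_1\neq g_2$ on sets $E_1,E_2$ with $\delta:=\min\{\lambda(E_1),\lambda(E_2)\}>0$. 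Using that $G$ is Hausdorff, choose a symmetric $U\in\sN_G(1_G)$ with $g_1U\cap g_2U=\emptyset$. For a constant $c_g$ to lie in the basic neighborhood $f\cdot N(U,\epsilon)$ one needs $\lambda(\{x: f(x)^{-1}g\notin U\})<\epsilon$; but $f(x)^{-1}g\in U$ forces $g\in g_1U$ on $E_1$ and $g\in g_2U$ on $E_2$, which cannot hold simultaneously, so the bad set has measure at least $\delta$. Taking $\epsilon<\delta$ exhibits a neighborhood of $f$ disjoint from $G$, proving $G$ closed.

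For (b) I would connect each $f\in G^*$ to $1_{G^*}$ by the ``sweep'' path $t\mapsto f_t$, where $f_t(x):=f(x)$ for $x<t$ and $f_t(x):=1_G$ for $x\geq t$, so that $f_0=1_{G^*}$ and $f_1=f$. The key computation is that for $s<t$ the product $f_s^{-1}f_t$ equals $1_G$ off the interval $[s,t)$, and hence lies in $N(U,\epsilon)$ as soon as $|t-s|<\epsilon$ (recall $1_G\in U$ always); this gives continuity of $t\mapsto f_t$ directly from the definition of $\sT$. Path-connectedness of $G^*$ follows (join $f$ to $h$ through $1_{G^*}$), and since $G^*$ is Hausdorff this yields arcwise connectedness.

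The heart of the theorem is (c), and here the idea is to use, for a fixed basic neighborhood $N(U,\epsilon)$, the subgroups supported on short intervals. For an interval $I=[a,b)\subseteq[0,1)$ and $g\in G$ let $g_I$ denote the function equal to $g$ on $I$ and to $1_G$ off $I$; then $H_I:=\{g_I:g\in G\}$ is a subgroup of $G^*$ isomorphic to $G$, since $g_Ih_I=(gh)_I$. Whenever $\lambda(I)=b-a<\epsilon$ we have $H_I\subseteq N(U,\epsilon)$, because each $g_I$ differs from $1_G$ only on $I$. Taking $\sH:=\{H_I: I=[a,b)\subseteq[0,1),\ b-a<\epsilon\}$, I claim $\langle\bigcup\sH\rangle=G^*$ outright: given any $f\in G^*$, subdivide its finitely many steps into subintervals $I_1,\dots,I_k$ each of length $<\epsilon$, and on $I_j$ let $f$ take value $g_j$; then $f=(g_1)_{I_1}\cdots(g_k)_{I_k}$, a product of elements of $\bigcup\sH$ supported on disjoint intervals. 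Thus $\langle\bigcup\sH\rangle$ is (trivially dense, being) all of $G^*$, and $G^*\in{\rm SSGP}$.

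The only genuinely delicate background points, which I would verify but not belabor, are the standing claims that the sets $N(U,\epsilon)$ form a neighborhood base at $1_{G^*}$ for a (Hausdorff, hence Tychonoff) group topology and that $g\mapsto c_g$ is an embedding; the measure-subadditivity estimate underlying continuity of multiplication is the same estimate used throughout. Granting those, the step I expect to carry the real weight is the factorization of an arbitrary step function into interval-supported pieces in part (c): it is what simultaneously delivers density (indeed surjectivity) of $\langle\bigcup\sH\rangle$ while keeping every generating subgroup $H_I$ inside the prescribed neighborhood, and it is the only place where the specific structure of $G^*$ as step functions on $[0,1)$ is essential.
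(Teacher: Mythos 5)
Your proof is correct and follows essentially the same route as the paper's: the constant-function embedding plus a two-value separation argument for (a), the sweep path $t\mapsto f_t$ for (b), and the interval-supported subgroups (your $H_I$, the paper's $F(I)$) together with subdivision of steps into intervals of length $<\epsilon$ for (c). The only differences are cosmetic---you spell out the measure estimate in (a) and the disjoint-support factorization in (c), which the paper leaves largely implicit.
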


\begin{proof}
Note first that the association of each $x\in G$ with the function $x^*\in G^*$
(the function given by $x^*(r):=x$ for all $r\in[0,1)$) 
realizes $G$ algebraically as a subgroup of $G^*$. Furthermore
the map $x\rightarrow x^*$ is a homeomorphism onto its range,
since for $\epsilon < 1$, $U\in\sN(1_G)$ and $x\in G$ one has

$x\in U\Leftrightarrow x^*\in N(U,\epsilon)$.

{\rm(a)} Let $f_0 \in G^*$ and $f_0 \notin G$.
There are distinct (disjoint)
subintervals of $[0,1)$ on which $f_0$ assumes distinct values $g_0,g_1\in G$
respectively.  By the 
Hausdorff property there is $U \in \sN(1_G)$ such that 
$g_1  U \cap g_2 U = \emptyset$.  Choose $\epsilon$
smaller than the measure of either of the two
indicated intervals. Then $ f_0 N(U, \epsilon)$ is a 
neighborhood of $f_0$ such that $f_0N(U,\epsilon)\cap G=\emptyset$. Therefore, 
$G$ is closed in $G^*$.

{\rm (b)} Let $f \in G^*$ and for each $t \in [0,1)$ define 
$f_t: [0,1) \rightarrow G$ by $f_t(x) = f(x)$ for $0 \le x < t$ and $f_t(x) = 1_G$ 
for $t \le x < 1$; and define $f_1:=f$.  Then $t \mapsto f_t$ is a 
continuous map from $[0,1]$ to $G^*$ such that $f_0 = 1_{G^*}$ and $f_1 = f$.  
To show that the map is continuous, let $f_t  N(U, \epsilon)$ be a 
basic neighborhood of $f_t$ and let $s \in (t - \epsilon/4, \; t + \epsilon/4) \cap [0,1]$.  
Then $f_s \in f_t N(U, \epsilon)$, since

$\lambda(\{x\in[0,1):f_s(x)-f_t(x)\in U\})<\epsilon$.

\noindent We conclude that 
$G^*$ is arcwise connected.

{\rm (c)} Let $N(U,\epsilon)\in\sN(1_{G^*})$, and
for each interval $I=[t_0,t_1)\subseteq[0,1)$
with $t_1-t_0<\epsilon$ let

$F(I):=\{f\in G^*:f{\rm ~is~constant~on~}I, f\equiv1_G{\rm~on~}[0,1)\backslash I\}$.

\noindent Then $F(I)$ is a subgroup of $G^*$
and $F(I)\subseteq N(U,\epsilon)$, and with $\sH_\epsilon:=\{F(I)\}$ we have
that each $f\in G^*$ is 
the product of finitely many elements from $\bigcup\sH_\epsilon$---i.e.,
$f\in\langle\bigcup\sH_\epsilon\rangle\subseteq \overline{\langle\bigcup\sH_\epsilon\rangle}$.

It follows that $G^* \in{\rm SSGP}\subseteq{\rm m.a.p.}$.
\end{proof}

There are also countable subgroups of $G^*$ which 
retain properties (a) and (c) (but not (b)) of Theorem~\ref{HMDW}.  We make the 
following definition.

\begin{definition}\label{deirolf-warken_sub}
{\rm
Let $G$ be a topological group and let
$A\subseteq [0,1)$
where $A$ is dense in $[0,1)$ and $0 \in A$.  Then
$G^*_A = (G^*_A, \s T)$ is the subgroup of $(G^*, \s T)$ 
obtained by restriction of step functions on $[0,1)$ to 
those steps $[a,b)$ such that $a, b \in A \cup \{1\}, \; a < b$.
}
\end{definition}

\begin{theorem}\label{DW_sub}
Let $G$ be a topological group. Then

{\rm (a)} $G$ is closed in $G^*_A =(G^*_A,\sT)$; 

{\rm (b)} $G^*_A$ is dense in $G^*$;

{\rm (c)} $G^*_A \in~{\rm SSGP}$; and

{\rm (d)} if $G$ is abelian, then the groups
$G^*_A$, $G^{(\lambda)}$ (with $\lambda=|A|$)
are isomorphic as groups.
\end{theorem}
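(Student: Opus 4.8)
The plan is to handle the four parts in order, using throughout that $G^*_A$ carries the subspace topology of $G^*$ and that the constant functions embed $G$ as a subgroup of $G^*_A$ (each $x\in G$ is the step function $x^*$ whose single step $[0,1)$ uses only the admissible endpoints $0\in A$ and $1$). Part (a) then requires no new work: since $G\subseteq G^*_A\subseteq G^*$ and $G$ is closed in $G^*$ by Theorem~\ref{HMDW}(a), the set $G=G\cap G^*_A$ is closed in the subspace $G^*_A$.

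For part (b) the key tool is the density of $A$. Given $f\in G^*$ with breakpoints $0=c_0<c_1<\cdots<c_k=1$ and a basic neighborhood $fN(U,\epsilon)$, I would choose $a_i\in A$ with $a_i$ so close to $c_i$ (for $1\le i\le k-1$, with $a_0:=0$, $a_k:=1$) that $a_0<a_1<\cdots<a_k$ and $\sum_i|a_i-c_i|$ is arbitrarily small, and then define $g\in G^*_A$ to take on $[a_{i-1},a_i)$ the value that $f$ takes on $[c_{i-1},c_i)$. Then $f$ and $g$ differ only on a union of short intervals surrounding the $c_i$, of total measure below $\epsilon$; on the complement $f(x)^{-1}g(x)=1_G\in U$, so $g\in fN(U,\epsilon)\cap G^*_A$, establishing density.

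Part (c) is where the one genuine subtlety lies: it is tempting to combine part (b) with Theorem~\ref{prsrv_prop}(c), but that result runs in the opposite direction (a \emph{dense} SSGP subgroup forces the \emph{ambient} group to be SSGP), and its converse can fail, so it cannot deliver SSGP for the dense subgroup $G^*_A$ from the known relation $G^*\in{\rm SSGP}$. Instead I would give a direct argument mirroring Theorem~\ref{HMDW}(c): for a basic $N(U,\epsilon)\cap G^*_A$, let $\sH_\epsilon$ consist of the subgroups $F(I)$ (functions constant on $I$ and $\equiv 1_G$ off $I$) as $I=[t_0,t_1)$ ranges over intervals with $t_0\in A$, $t_1\in A\cup\{1\}$, and $t_1-t_0<\epsilon$. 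Each such $F(I)\subseteq N(U,\epsilon)\cap G^*_A$; and using the density of $A$ I would refine the steps of an arbitrary $f\in G^*_A$ by inserting points of $A$ so that every resulting subinterval has length $<\epsilon$, exhibiting $f$ as a finite product of elements of $\bigcup\sH_\epsilon$. Hence $f\in\langle\bigcup\sH_\epsilon\rangle$ and $G^*_A\in{\rm SSGP}$.

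Finally, part (d) is purely algebraic and uses that $G$ is abelian. For each $a\in A$ set $T_a:=\{\phi_a(g):g\in G\}$, where $\phi_a(g)$ equals $g$ on $[a,1)$ and $1_G$ on $[0,a)$; each $\phi_a(\cdot)$ is an isomorphism $G\cong T_a$. Writing an arbitrary $f\in G^*_A$ with value $g_i$ on $[a_{i-1},a_i)$ (all $a_i\in A\cup\{1\}$, $a_0=0$) as the telescoping product $\phi_{a_0}(g_1)\prod_{i=1}^{m-1}\phi_{a_i}(g_{i+1}g_i^{-1})$ shows that the $T_a$ generate $G^*_A$, and reading this relation off interval by interval from the left shows the sum is direct; commutativity is what makes both computations collapse. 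Hence $G^*_A=\bigoplus_{a\in A}T_a\cong G^{(|A|)}=G^{(\lambda)}$. I expect part (c) — specifically the recognition that Theorem~\ref{prsrv_prop}(c) is inapplicable and that a direct generation argument via arbitrarily fine $A$-partitions is required — to be the main point to get right.
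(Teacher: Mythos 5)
Your proposal is correct and follows essentially the same route as the paper: the paper likewise proves (a) and (c) by repeating the arguments of Theorem~\ref{HMDW} with intervals restricted to endpoints in $A\cup\{1\}$, proves (b) by perturbing each step endpoint to a nearby point of $A$, and proves (d) via the same explicit isomorphism built from two-step functions indexed by $a\in A$ (the paper's copies of $G$ sit on $[0,a)$ rather than $[a,1)$, a cosmetic difference). Your added observations --- that (a) also follows instantly from $G$ being closed in $G^*$ plus the subspace topology, that Theorem~\ref{prsrv_prop}(c) runs the wrong way to derive (c) from (b), and the telescoping/directness verification in (d) --- are all sound and merely supply detail the paper leaves implicit.
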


\begin{proof}
With the obvious required change, the proofs of
(a) and (c) coincide with the corresponding 
proofs in Theorem \ref{HMDW}.  

{\rm (b)} Let $f \in G^*$ have  $n$ steps ($n<\omega$)
and let $f\cdot N(U,\epsilon)\in\sN_{G^*}(f)$. Then there is
$\widetilde{f}\in f\cdot N(U,\epsilon)\cap G^*_A$ such
that $\widetilde{f}$ has
step end-points in $A \cup \{1\}$, each within 
$\epsilon /n$ of the corresponding end-point for $f$.

{\rm (d)}  We give an explicit isomorphism.  $G^{(\lambda)}$ can 
be expressed as the set of functions $\phi: A \rightarrow G$ with
finite support and pointwise addition.  Each such function is
the sum of finitely many elements of the form
$\phi_{a,g}$ with $a \in A$, 
$g \in G$, $\phi_{a,g}(a) = g$ and $\phi_{a,g}(x) = 0$ for $x \ne a$.  
Now we define corresponding functions $f_{a,g} \in G^*_A$.  
Let $f_{0,g}(x) = g$ for all $x \in [0,1)$ and for $a > 0$,
let $f_{a,g}$ be the two-step function defined by 
$f_{a,g}(x) = g$ for $0 \le x < a$ and $f_{a,g}(x) = 0$ for 
$a \le x < 1$.  Then the map $\phi_{a,g} \mapsto f_{a,g}$ 
extends linearly to an isomorphism from $G^{(\lambda)}$ 
onto $G^*_A$.
\end{proof}

\begin{remark}  {\rm Note that Theorem \ref{prsrv_prop}(c)
cannot be used to prove (c) from (b) in
Theorem~\ref{DW_sub}.  Note also that the isomorphism 
given in the proof of (d) provides a way of imposing an 
SSGP topology on $G^{(\lambda)}$ for $\omega \le \lambda \le c$ 
and $G$ an abelian group.  A corresponding 
mapping can be given when $G$ is nonabelian but it 
need not be an isomorphism.  
In that case, it is still possible to write each element of 
$G^*_A$ as a product of two-step functions, but then 
it is necessary to specify the order in which they are 
to be multiplied.}
\end{remark}

Some other SSGP groups arise as a consequence
of the following fact.

\begin{theorem}\label{torsionSSGP} 
Let $G=(G,\sT)$ be a (possibly nonabelian) torsion
group of bounded order
such that $(G,\sT)$ has no proper open subgroup.
Then $G\in{\rm SSGP}$.
\end{theorem}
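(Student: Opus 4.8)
The plan is to exploit the bounded exponent to manufacture, inside any prescribed $U\in\sN(1_G)$, an abundance of (necessarily finite) cyclic subgroups whose union already contains a whole neighborhood of $1_G$; the no-proper-open-subgroup hypothesis then upgrades ``generates an open subgroup'' to ``generates all of $G$''. Concretely, let $N$ be a bound for the orders in $G$, so that $g^N=1_G$ for every $g\in G$, and fix an arbitrary $U\in\sN(1_G)$. By continuity of the group multiplication I would first choose a neighborhood $V\in\sN(1_G)$ with $V^N\subseteq U$; since $1_G\in V$ this also yields $V^k\subseteq U$ for every $0\le k\le N$.

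The crucial observation is that for each $g\in V$ the cyclic subgroup $\langle g\rangle$ lies entirely inside $U$. Indeed $g$ has finite order dividing $N$, so as a set $\langle g\rangle=\{g^k:0\le k\le N-1\}$, and each power satisfies $g^k\in V^k\subseteq U$ (with $g^0=1_G\in U$). This is exactly where boundedness of the order is indispensable: without a uniform bound on orders a single generator close to $1_G$ could have high powers escaping $U$, and no such containment could be guaranteed. Setting $\sH:=\{\langle g\rangle:g\in V\}$, I obtain a family of subgroups of $G$ with $\sH\subseteq\sP(U)$.

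Finally I would observe that $\bigcup\sH\supseteq V$, whence $\langle\bigcup\sH\rangle\supseteq\langle V\rangle$. A subgroup that contains a neighborhood of the identity is open (being a union of translates of that neighborhood), so $\langle V\rangle$ is an open subgroup of $G$; by hypothesis $G$ has no proper open subgroup, forcing $\langle V\rangle=G$. Hence $\langle\bigcup\sH\rangle=G$ is (trivially) dense in $G$, and since $U$ was arbitrary this establishes $G\in{\rm SSGP}$. I do not anticipate a genuine obstacle here: the one step demanding care is the containment $\langle g\rangle\subseteq U$, which is precisely the point that forces the invocation of the bounded exponent, while the openness of $\langle V\rangle$ and its consequent coincidence with $G$ are routine facts about topological groups.
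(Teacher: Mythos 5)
Your proof is correct and takes essentially the same route as the paper's own argument: pick $V\in\sN(1_G)$ with $V^N\subseteq U$, observe that $g\in V$ forces $\langle g\rangle\subseteq U$, take $\sH:=\{\langle g\rangle:g\in V\}$, and use the absence of proper open subgroups to get $\langle V\rangle=G$. The single nuance you gloss over is that a bound $M$ on the element orders does not by itself give $g^M=1_G$ (the orders need not divide $M$); the paper fixes this exactly as one should, by setting $N:=M!$ so that $x^N=1_G$ for all $x\in G$ before choosing $V$.
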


\begin{proof}
There is an integer M which bounds the order of each
$x\in G$, and then $N:=M!$ satisfies $x^N=1_G $ for each $x\in G$.

We must show: Each $U\in\sN(1_G)$ contains a family $\sH$ of subgroups 
such that $\langle\bigcup\sH\rangle$ is dense in $G$. Given such $U$,
let $V\in\sN(1_G)$ 
satisfy $V^N\subseteq U$. For each 
$x\in V$ we have $x^k\in U$ for $0\leq k\leq N$, hence
$x\in V\Rightarrow\langle x\rangle\subseteq U$.  Thus with
$\sH:=\{\langle x\rangle:x\in V\}$ we 
have: $\sH$ is a family of subgroups of $U$ (that is, of subsets of
$U$ which are subgroups of $G$). Then $V\subseteq\bigcup\sH$, so
$G=\langle V\rangle\subseteq\langle\bigcup\sH\rangle$---the first equality 
because $\langle V\rangle$ is an open subgroup of $G$.
\end{proof}

In Corollaries \ref{connectedSSGP} and \ref{3equivs}
we record
two consequences of Theorem~\ref{torsionSSGP}.

\begin{corollary}\label{connectedSSGP}
If $(G,\sT)$ is a (possibly nonabelian) connected
torsion group of bounded order, then
$(G,\sT)\in{\rm SSGP}$.
\end{corollary}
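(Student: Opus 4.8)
The plan is to deduce this immediately from Theorem~\ref{torsionSSGP}, for which the only hypothesis needing verification is the absence of proper open subgroups. Since $(G,\sT)$ is already assumed to be a torsion group of bounded order, everything will reduce to the single observation that a connected topological group admits no proper open subgroup.

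First I would recall the standard fact that every open subgroup $H$ of a topological group is also closed. The reason is that the complement $G\setminus H$ is a union of (left) cosets $gH$, each of which is the image of the open set $H$ under the homeomorphism $x\mapsto gx$ and hence open; thus $G\setminus H$ is open and $H$ is closed. So any open subgroup is clopen.

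Next I would invoke connectedness: the only clopen subsets of a connected space are $\emptyset$ and the whole space. An open subgroup $H$ contains $1_G$, so $H\neq\emptyset$, and being clopen it must equal all of $G$. Hence the only open subgroup of $(G,\sT)$ is $G$ itself, i.e.\ $(G,\sT)$ has no proper open subgroup.

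With that in hand, $(G,\sT)$ satisfies all the hypotheses of Theorem~\ref{torsionSSGP} — it is a (possibly nonabelian) torsion group of bounded order with no proper open subgroup — and so $(G,\sT)\in{\rm SSGP}$, as required. I do not expect any genuine obstacle here; the whole content lies in the routine clopen-subgroup argument, and the bounded-order and torsion hypotheses are simply passed through verbatim to Theorem~\ref{torsionSSGP}.
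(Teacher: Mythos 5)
Your proof is correct and follows exactly the route the paper takes: the paper's own proof simply observes that a connected group has no proper open subgroup and then applies Theorem~\ref{torsionSSGP}. The only difference is that you spell out the standard clopen-subgroup argument, which the paper leaves implicit.
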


\begin{proof}
A connected group has no proper open subgroup, so
Theorem~\ref{torsionSSGP} applies.
\end{proof}

\begin{lemma}\label{mapnopensub}
Let $G\in{\rm m.a.p.}$  and $G$ abelian.  
Then $G$ does not contain
a proper open subgroup.
\end{lemma}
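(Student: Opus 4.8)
The plan is to argue by contradiction: I would assume that the abelian m.a.p. group $G$ possesses a proper open subgroup $H$ and then manufacture a nontrivial continuous homomorphism from $G$ into the circle group $\TT$, contradicting the observation recorded in the Introduction that an abelian topological group is m.a.p. precisely when every continuous homomorphism into $\TT$ is trivial.

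First I would record the standard facts about open subgroups. Since $H$ is open, each coset $gH$ is open, so $G\setminus H=\bigcup_{g\notin H}gH$ is open and $H$ is therefore also closed; thus $G/H$ is a Hausdorff topological group. More to the point, because $H$ is open the singleton $\{H\}=q[H]$ pulls back under the quotient map $q:G\to G/H$ to the open set $H$, so $\{H\}$ is open in the quotient topology and hence $G/H$ is \emph{discrete}; and since $H$ is proper, $G/H$ is nontrivial. The next step is to produce a nontrivial character on this discrete quotient. Choosing a nonidentity coset $gH\in G/H$, its cyclic subgroup $\langle gH\rangle$ is isomorphic either to $\ZZ$ or to a nontrivial finite cyclic group, and in either case admits an evident nontrivial homomorphism into $\TT$ (sending $gH$ to an element of the appropriate finite order, or to any nonidentity element when the order is infinite). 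Since $\TT$ is divisible, hence injective in the category of abelian groups, this homomorphism extends to a homomorphism $\psi:G/H\to\TT$ with $\psi(gH)\neq 1_\TT$.

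Finally I would assemble the contradiction. The quotient homomorphism $q:G\to G/H$ is continuous, and because $G/H$ carries the discrete topology the map $\psi$ is automatically continuous; hence $\phi:=\psi\circ q:G\to\TT$ is a continuous homomorphism, and it is nontrivial because $q$ is surjective and $\psi$ is nontrivial. As $\TT$ is compact, the existence of $\phi$ contradicts $G\in{\rm m.a.p.}$, which completes the argument. The only genuinely nonformal ingredient is the extension of the character from the cyclic subgroup to all of $G/H$, so the injectivity (divisibility) of $\TT$ is the crux of the proof; everything else is routine manipulation of the quotient topology, and I anticipate no serious obstacle.
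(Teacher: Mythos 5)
Your proof is correct, and its skeleton coincides with the paper's: assume a proper open subgroup $H$, observe that $G/H$ is a nontrivial discrete abelian group, produce a nontrivial character $\psi:G/H\rightarrow\TT$, and compose with the quotient map to obtain a nontrivial continuous homomorphism into a compact group, contradicting $G\in{\rm m.a.p.}$ The difference lies in how the character on the quotient is obtained. The paper simply cites the fact that a nontrivial discrete (and therefore locally compact) abelian group admits a nontrivial continuous homomorphism into $\TT$ --- a statement that ultimately rests on the character theory of locally compact abelian groups. You instead prove this fact in the discrete case by hand: pick a nonidentity coset $gH$, define a nontrivial homomorphism on the cyclic subgroup $\langle gH\rangle$ (possible whether it is infinite or finite cyclic), and extend it to all of $G/H$ using the divisibility, hence injectivity, of $\TT$ as an abelian group; continuity is then automatic from discreteness. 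Your route is more elementary and self-contained, requiring only the algebraic fact that divisible abelian groups are injective $\ZZ$-modules rather than harmonic-analytic machinery, at the cost of a somewhat longer argument; the paper's version is shorter but leans on heavier background theory. Both arguments are complete and valid.
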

\begin{proof}
Suppose that $H$ is a proper open subgroup
of $G$. Since $G/H$ is a nontrivial abelian 
discrete (and therefore locally compact) group, 
there is a nontrivial
(continuous) homomorphism $\phi:G/H\rightarrow \TT$.
Then the composition of $\phi$ with the projection map from 
$G$ to $G/H$ is a nontrivial continuous homomorphism 
from $G$ to a compact group, contradicting the m.a.p. property of $G$.
\end{proof}

\begin{remark}
{\rm
We are grateful to Dikran Dikranjan for the helpful reminder
that Lemma~\ref{mapnopensub} fails
when the ``abelian'' hypothesis is omitted.
Examples to this effect abound, samples including: (a)~the infinite algebraically simple
groups whose only group topology is the discrete topology, as
concocted by Shelah~\cite{shelah} under [CH], and by Hesse~\cite{hessei}
and Ol$'$shanski\u{\i}~\cite{olshai} (and later by several others) in [ZFC];
and (b)~such matrix groups as $SL(2,\CC)$, shown by von
Neumann~\cite{neumann} to be m.a.p. even in the discrete topology
(the later treatments \cite{neumannwig}, \cite{hri}(22.22(h))
and \cite{comfi}(9.11) of this specific group follow closely those of \cite{neumann}). 
}
\end{remark}

\begin{corollary}\label{3equivs} 
For an abelian torsion group $G$
of bounded order,  these
conditions are equivalent for each group
topology $\sT$ on $G$.

{\rm (a)} $(G,\sT)\in{\rm SSGP}$;

{\rm (b)} $(G,\sT)\in{\rm m.a.p.}$; and

{\rm (c)} $(G,\sT)$ has no proper open normal subgroup.
\end{corollary}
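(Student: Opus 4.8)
The plan is to establish the cyclic chain of implications (a)$\Rightarrow$(b)$\Rightarrow$(c)$\Rightarrow$(a), invoking at each step one of the results already proved. The single observation that makes everything fit together is that, since $G$ is abelian, every subgroup is normal; hence the phrase ``proper open normal subgroup'' in condition (c) is synonymous with ``proper open subgroup,'' and I would use the two interchangeably throughout. None of the three implications requires any new construction; the corollary is essentially a bookkeeping assembly of Theorem~\ref{SSGPvsmap}, Lemma~\ref{mapnopensub}, and Theorem~\ref{torsionSSGP}.

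For (a)$\Rightarrow$(b): this is immediate from Theorem~\ref{SSGPvsmap} together with Remark~\ref{inclusions}(b), which identify ${\rm SSGP}={\rm SSGP}(1)\subseteq{\rm m.a.p.}$; no use of the torsion or bounded-order hypotheses is needed here, and the implication in fact holds for every abelian (indeed every) topological group. For (b)$\Rightarrow$(c): Lemma~\ref{mapnopensub} asserts precisely that an abelian m.a.p.\ group contains no proper open subgroup, and by the normality remark above this is exactly condition (c); again the standing hypotheses on $G$ play no role.

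For (c)$\Rightarrow$(a): here I would appeal directly to Theorem~\ref{torsionSSGP}. Its hypotheses are that $G$ be a (possibly nonabelian) torsion group of bounded order with no proper open subgroup, and condition (c) supplies the last of these while the ambient assumption on $G$ supplies the bounded-order torsion part; the conclusion $G\in{\rm SSGP}$ is condition (a). It is worth noting that this is the only one of the three directions in which the bounded-order torsion hypothesis is genuinely consumed.

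I expect no real obstacle, since all the content lives in the three cited results. The only point demanding care is the matching of the word ``normal'' in (c) against the plain ``subgroup'' phrasing of Lemma~\ref{mapnopensub} and Theorem~\ref{torsionSSGP}; the abelian hypothesis dissolves this discrepancy at once. (One could also remark, by way of sanity check, that Corollary~\ref{connectedSSGP} is the special case of the (c)$\Rightarrow$(a) step obtained when connectedness is used to rule out proper open subgroups.)
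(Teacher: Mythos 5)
Your proposal is correct and follows exactly the paper's own proof: the cyclic chain (a)$\Rightarrow$(b)$\Rightarrow$(c)$\Rightarrow$(a) cited respectively to Theorem~\ref{SSGPvsmap}, Lemma~\ref{mapnopensub}, and Theorem~\ref{torsionSSGP}. Your explicit remark that the abelian hypothesis makes ``normal'' redundant in condition (c) is a point the paper leaves implicit, but it is the same argument.
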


\begin{proof}
The implications (a)~$\Rightarrow$~(b),
(b)~$\Rightarrow$~(c),
and (c)~$\Rightarrow$~(a) are given respectively
by Theorem~\ref{SSGPvsmap},
Lemma~\ref{mapnopensub}, and Theorem \ref{torsionSSGP}.
\end{proof}

\begin{remark}\label{MGR}
{\rm
It is worthwhile to note that connected torsion groups of
bounded order, as hypothesized in Theorem~\ref{connectedSSGP},
do exist. 
For the reader's convenience, drawing freely on the expositions
\cite{smitht74}
and \cite{comfi}(2.3--2.4), we outline the essentials
for $0<n<\omega$
of a construction, first given by Markov~\cite{mark}, \cite{markii} and
Graev~\cite{graevi}, 
of a nontrivial abelian connected torsion group $G$ of order $n$. Let
$X$ be a Tychonoff space and let

$G:=\{\Sigma_{i=1}^N\,k_ix_i:k_i\in\ZZ, N<\omega, x_i\in X\}$\\
\noindent be the free abelian
topological group on the alphabet
$X$ with $0_G=0$, and for continuous $f:X\rightarrow H$ with
$H$ a topological abelian group
define $\overline{f}:G\rightarrow H$
by $\overline{f}(\Sigma_{i=1}^N\,k_ix_i)=
\Sigma_{i=1}^N\,k_if(x_i)\in H$. It is
easily checked, as in the sources cited, that (a)~in
the (smallest) topology $\sT$ making each
such $\overline{f}$ continuous, $(G,\sT)$ is
a (Hausdorff) topological group; (b)~the map $x\rightarrow1\cdot x$ from $X$
to $G$ maps $X$ homeomorphically onto a closed
topological subgroup of $G$; and (c)~$G$ is
connected if (and only if) $X$ is connected.

Now take $X$ compact connected and fix $n$ such that
$0<n<\omega$.
It suffices to show that (1)~$nX$
is a proper closed subset of $G$, and (2)~ every proper closed
subset $F\subseteq X$ generates a proper closed subgroup $\langle F\rangle$ of $(G,\sT)$; for then the
group $G/\langle nX\rangle$ will be as desired, since
$a\in G\Rightarrow na\in\langle nX\rangle$.

(1)~$nX$ is compact in $G$, hence closed. Define
$f_0:X\rightarrow\RR$ by $f_0\equiv1$; then
$\overline{f_0}\equiv n$ on $nX$, while for $x\in X$ we have
$\overline{f_0}((n+1)x)=n+1$, so $(n+1)x\notin nX$.

(2) Given $x\in X\backslash F$ choose continuous $f_1:X\rightarrow\RR$  such that $f_1(x)=1$,
$f_1\equiv0$ on $F$.
Then $\overline{f_1}\equiv0$ on $\langle F\rangle$ and $\overline{f_1}(x)=1$, so
$x=1\cdot x\notin\langle F\rangle$; so $\langle F\rangle$ is proper in $G$. If $a=\Sigma_{i=1}^N\,k_ix_i\in G\backslash\langle F\rangle$ there is $i_0$ such
that $x_{i_0}\notin F$, and with continuous
$f_2:X\rightarrow\RR$ such that $f_2(x_{i_0})=1$,
$f_2(x_i)=0$ for $i\neq i_0$ and $f_2\equiv0$ on $F$
we have $\overline{f_2}(a)=k_{i_0}$
and $\overline{f_2}\equiv0$
on $\langle F\rangle$. Then
$U:=\overline{f_2}^{-1}(k_{i_0}-1/3,k_{i_0}+1/3)\in\sN_G(a)$ and
$U\cap\langle F\rangle=\emptyset$; so $\langle F\rangle$ is closed in $G$.

We note that as in Corollary~\ref{connectedSSGP} the group
$G$, being connected, has no  proper open subgroup.
}
\end{remark}

\section{SSGP Groups: Some Specifics}

The question naturally arises whether for $n<\omega$ the class-theoretic
inclusion
SSGP$(n)\subseteq{\rm SSGP}(n+1)$ is proper. The issue is addressed in
part by
Prodanov~\cite{prod80} (of course, the classes SSGP$(n)$ had not been
formally defined in 1980); he
provided on the direct sum $G:=\ZZ^{(\omega)}=\bigoplus_\omega\,\ZZ$ a topological group
topology $\sT$ which satisfies (as we show below)
$(G,\sT)\in~{\rm SSGP}(2)$ and
$(G,\sT)\notin~{\rm SSGP}(1)$.
Given $G$, Prodanov~\cite{prod80} constructs a basis at 
$0$ for a group topology $\sT$ as follows:  Let $e_m \; (m = 1, 2, \ldots)$ 
be the canonical basis for $G$. Then, use 
induction to define a sequence of finite subsets of $\ZZ^{(\omega)}$:
\begin{quote}
``Let $A_1 = \{e_1 - e_2,\, e_2\}$, and suppose that the sets 
$A_1,\, A_2,\, \ldots ,\, A_{m-1}$ \;\; $(m = 2, 3, \ldots )$ are already defined.  By $\alpha_m$ we 
denote an integer so large that the $s$-th co-ordinates of all elements of $A_1 \cup A_2 \cup 
\ldots \cup A_{m-1}$ are zero for $s \geq \alpha_m$.  Now we define $A_m$ to consist of all 
differences
\begin{quote} 
$(1)$ $\displaystyle{e_{i + k \alpha_{m}} - e_{i + (k+1) \alpha_m} \hspace{.5cm}
(1 \leq i \leq m, \; 0 \leq k \leq 2^{m-1} -1)}$ 
\end{quote} 
and of the elements 
\begin{quote}
$(2)$ $\displaystyle{e_{i + 2^{m - 1} \alpha_m} \hspace{2.2cm} (1 \le i \le m)}$ .
\end{quote}
Thus the sequence $\{A_m\}^\infty_{m=1}$ is defined.\\
Now for arbitrary  $n \ge 1$ we define
\begin{quote}
$(3)$ $U_n:= (n+1)! \ZZ^{(\omega)} \pm A_n \, \pm 2 A_{n+1} \, \pm \ldots 
\pm 2^l A_{n + l} \pm \ldots.$''
\end{quote}
\end{quote}
(By the notation of (3) Prodanov means that $U_n$ consists of those elements
of $\ZZ^{(\omega)}$ 
which can be represented as a finite sum consisting of an element
divisible by $(n+1)!$ plus 
at most one element of $A_n$ with arbitrary sign, plus at most two
elements of $A_{n+1}$ with 
arbitrary signs, plus at most four elements of $A_{n+2}$
with arbitrary signs, and so on.)
\begin{quote}
``It follows directly from that definition that the sets $U_n$ are
symmetric with respect to $0$, 
and that $U_{n+1} + U_{n+1} \subset U_n \; (n = 1, 2, \ldots)$.
Therefore they form a 
fundamental system of neighborhoods of $0$ for a group topology
$\sT$ on $\ZZ^{(\omega)}$."
\end{quote}

Since we need it later, we give a careful proof of an additional fact
outlined only briefly by Prodanov~\cite{prod80}.

\begin{theorem}\label{prodT2}
The group $\ZZ^{(\omega)}$ with the group topology $\sT$ defined above is 
Hausdorff.
\end{theorem}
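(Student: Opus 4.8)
The plan is to reduce Hausdorffness to the triviality of the intersection of the basic neighborhoods of $0$, and then, for a fixed nonzero $x$, to obstruct $x\in U_n$ for a well-chosen $n$ by tracing the telescoping structure of the blocks $A_m$ coordinate by coordinate. First I would invoke the standard description of the closure of the identity in a topological group, $\overline{\{0\}}=\bigcap_{U\in\sN(0)}U$. Since the symmetric sets $U_n$ form a base at $0$, this gives $\overline{\{0\}}=\bigcap_{n\ge1}U_n$, and (recalling Conventions (b), whereby for a group topology Hausdorffness already yields the Tychonoff property) $\sT$ is Hausdorff exactly when $\bigcap_{n\ge1}U_n=\{0\}$. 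It therefore suffices to show that each $x\in\ZZ^{(\omega)}\setminus\{0\}$ lies outside some $U_n$. Fix such an $x$ and let $q$ be the top of its support, i.e. the largest index with $x_q\ne0$.

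Next I would record the geometry of the generators. For each $m$ the differences (1) organize the indices into $m$ \emph{chains}: chain $i$ runs through the coordinates $i,\,i+\alpha_m,\,\dots,\,i+2^{m-1}\alpha_m$, the generator $e_{i+k\alpha_m}-e_{i+(k+1)\alpha_m}$ being the step from one coordinate to the next, while the generator (2), $e_{i+2^{m-1}\alpha_m}$, caps the chain at its top. The choice of $\alpha_m$ yields range separation: $\mathrm{supp}(A_1\cup\dots\cup A_{m-1})\subseteq[1,\alpha_m)$. I would then pick $n\ge2$ with $\alpha_n>q$ (possible since $\alpha_m\to\infty$) and unwind the gloss on (3): $x\in U_n$ means there is a finite signed combination $w$ using at most $2^l$ generators from each $A_{n+l}$ with $x\equiv w\pmod{(n+1)!}$ in every coordinate; equivalently $w_c\equiv x_c$ for $c\le q$ and $w_c\equiv0\pmod{(n+1)!}$ for $c>q$.

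The core is a charge-counting obstruction. Because $\alpha_n>q$, the only generators meeting a coordinate $i\le q$ are the chain-bottom differences $e_i-e_{i+\alpha_{n+l}}$, each depositing $+1$ at $i$ and $-1$ at the high coordinate $i+\alpha_{n+l}$; and since every difference preserves total coordinate-sum while each cap (2) contributes $+1$, any net charge delivered to the low coordinates must originate at a cap and be carried down a chain. Cancelling the $-1$ left at $i+\alpha_{n+l}$ forces one to climb chain $i$ step by step toward a cap, so delivering nonzero charge to a fixed low coordinate requires completing a telescope terminated by a generator of type (2). But completing a chain of $A_{n+l}$ uses $2^{n+l-1}$ differences plus one cap, i.e. $2^{n+l-1}+1$ generators, whereas the budget for $A_{n+l}$ is only $2^l$, and $2^l<2^{n+l-1}$ since $n\ge2$; the telescope cannot be closed within budget, and at the relevant capping coordinates only boundedly many generators meet, so $(n+1)!\ZZ^{(\omega)}$ cannot absorb the leftover for the low blocks either. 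I would conclude that no admissible $w$ reproduces the low-coordinate data of $x$ (in particular the nonzero $x_q$), whence $x\notin U_n$.

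The main obstacle is cross-block interference. A single coordinate can serve as the bottom of a chain in blocks $A_{m'}$ of arbitrarily large index, whose budgets $2^{m'-n}$ eventually exceed the modulus $(n+1)!$, so one cannot argue merely that ``few generators meet a coordinate.'' Making the deficit count rigorous therefore requires a global accounting—most naturally a top-down induction on the highest block actually used—showing that the unavoidable cap at the top of the tallest active chain carries a leftover that is simultaneously nonzero and, relative to the generators able to reach it, too small to be a multiple of $(n+1)!$. Reconciling this unbounded supply of ever-taller chains with the fixed modulus is precisely the delicate point Prodanov left implicit; the quantitative engine throughout is the inequality $2^{n+l-1}>2^l$, i.e. that each block's telescopes are too long to complete on their own budget.
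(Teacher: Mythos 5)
Your overall strategy---reduce Hausdorffness to $\bigcap_{n\ge1}U_n=\{0\}$, then obstruct membership of a fixed nonzero $x$ in one well-chosen $U_n$ by tracing telescoping chains against the budget of $2^{l}$ generators from $A_{n+l}$---is exactly the paper's, down to the same key inequality (completing a chain of $A_m$ costs at least $2^{m-1}+1$ elements of $A_m$, versus the allowance $2^{m-n}$). But your choice of $n$ has a genuine gap: you require only $\alpha_n>q$, where $q$ is the top of the support of $x$. That is not enough, because $U_n$ contains the entire subgroup $(n+1)!\,\ZZ^{(\omega)}$. Since your selection rule for $n$ depends only on the support of $x$ and not on its values, one can take $x=(n+1)!\,e_1$ for the $n$ your rule would select: then $x\in U_n$ via the trivial combination $w=0$, and your concluding claim that no admissible $w$ reproduces ``the nonzero $x_q$'' is false---charge counting can only obstruct the delivery of charge that is nonzero \emph{modulo} $(n+1)!$. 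The paper closes this hole by taking $n:=\max(r,s)$, where $r$ bounds the support and $s$ is least such that $(s+1)!$ does not divide $x$; this guarantees a coordinate $p\le n$ with $(n+1)!\nmid x_p$, and it is that non-divisibility, not the mere nonvanishing of $x_q$, which forces chain-bottom differences $\pm(e_p-e_{p+\alpha_m})$ to appear. The repair is one line (also demand, say, $(n+1)!>\max_c|x_c|$), but as written the proposal proves a false statement.

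Second, your core counting argument is unfinished by your own account: you correctly identify the cross-block interference (a coordinate $p+\alpha_m$ can be the bottom of a chain of a much higher block $A_{m'}$, whose budget $2^{m'-n}$ eventually dwarfs the modulus $(n+1)!$) and you propose a top-down induction on the highest block actually used, but you do not execute it. That induction does work: if $M$ is the highest block used, no higher block interferes with the coordinates of $A_M$ above $\alpha_M$, so along each chain of $A_M$ the successive net counts $t_0,t_1,\dots,u$ are forced to be pairwise congruent mod $(n+1)!$; hence each chain either has all counts $\equiv 0\pmod{(n+1)!}$ or costs at least $2^{M-1}+1>2^{M-n}$ generators, and in the surviving case block $M$ contributes $0$ mod $(n+1)!$ at every coordinate below $\alpha_M$ and may be deleted, completing the descent to $x\equiv 0\pmod{(n+1)!}$, a contradiction. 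It is only fair to note that the paper's own proof is silent on this very point---it asserts that the components $\mp e_{p+\alpha_m}$ ``must be cancelled by the same components from additional terms'' of the same chain, ignoring both the mod-$(n+1)!$ slack and higher-block chain bottoms---so you are more candid about the difficulty than the paper is; but flagging a gap does not fill it. In sum: same approach as the paper, one outright error (the choice of $n$), and one acknowledged but unexecuted step.
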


\begin{proof}
It suffices to show $\bigcap_{n<\omega}\,U_n=\{0\}$.

Let $0\neq g = \sum_{ i } a_i e_i$ where the $e_i$ form the canonical basis.  
Then there is a least integer $r$ such that $a_i = 0$ for $i \ge r+1$, and there is 
a least integer $s$ such that $(s+1)!$ does not divide $g$.  Let $n := \max(r, s)$.   We claim that if  $n > 0$ then $g \notin U_n$.  Suppose otherwise.  Since $(n+1)!$ does not divide $g$, 
there is some $p \leq n$ 
such that $(n+1)!$ does not divide $a_p$.  Thus any representation of $g$ in the form $(3)$ must 
include, for at least one $m > n$, one or more terms of the form
$\pm(e_{p} - e_{p +  \alpha_m}) $, all with the same sign.   This means that 
the components $\pm e_{p +  \alpha_m}$ must be cancelled by the same components from 
additional terms of the form 
$\pm(e_{p + \alpha_m} - e_{p + 2  \alpha_m}) $.  This chain of implications continues until 
$k$ reaches its maximum value, $2^{m-1} -1$, with the inclusion of terms 
$\pm(e_{p + (2^{m-1} -1)\alpha_m} - e_{p + 2^{m-1}  \alpha_m}) $.  Finally, the components 
$\pm e_{p + 2^{m-1} \alpha_m}$ must be cancelled by terms of type $(2)$ with $i=p$ and the  
same value of $m$.    This means that we have necessarily included at least 
$2^{m-1}+1$ elements from $A_m$ in our expansion of $g$, contradicting the requirement that 
no more than $2^{m-n}$ elements of $A_m$ be included as summands for such representations 
of $g \in U_n$. 
\end{proof}

\begin{theorem}\label{prodSSGP2not1}
Prodanov's group $(G,\sT)$ satisfies $(G,\sT)\in{\rm SSGP}(2)$,
 $(G,\sT)\notin{\rm SSGP}(1)$.
\end{theorem}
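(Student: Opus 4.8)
The plan is to treat the two assertions separately, deriving the positive statement SSGP(2) from the structural fact that $(G,\sT)$ has no proper open subgroup, and the negative statement from a budgeting/telescoping analysis of the basic neighborhoods $U_n$.

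For SSGP(2): First I would record that $(G,\sT)$ has no proper open subgroup. Indeed, fix $n$ and take any $m\geq\max(n,i)$; the telescoping identity
$$e_i=\sum_{k=0}^{2^{m-1}-1}\bigl(e_{i+k\alpha_m}-e_{i+(k+1)\alpha_m}\bigr)+e_{i+2^{m-1}\alpha_m}$$
exhibits $e_i$ as a sum of single elements of $A_m$, each of which lies in $U_n$ (since $U_n$ admits one element of $A_m$ as soon as $m\geq n$); hence $\langle U_n\rangle=G$, and as every open subgroup contains some $U_n$, there is no proper open subgroup. Now, given $U\in\sN(1_G)$, choose $U_n\subseteq U$ and set $\sH:=\{\langle (n+1)!\,e_i\rangle:i<\omega\}$. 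Because the divisible part $(n+1)!\,\ZZ^{(\omega)}$ is contained in $U_n$, each member of $\sH$ is a subgroup contained in $U_n\subseteq U$, and $H:=\overline{\langle\bigcup\sH\rangle}=\overline{(n+1)!\,\ZZ^{(\omega)}}$ is normal. The quotient $G/H$ is abelian, is annihilated by $(n+1)!$ (hence a torsion group of bounded order), and inherits the absence of proper open subgroups from $G$; by Corollary~\ref{3equivs} it is therefore an SSGP group. This is exactly the requirement for $(G,\sT)\in$ SSGP(2).

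For $\notin$ SSGP(1): I would take $U_2$ as the witnessing neighborhood and show that the closed subgroup $\overline{6\,\ZZ^{(\omega)}}$ contains every subgroup sitting inside $U_2$ yet is proper; since $6\,\ZZ^{(\omega)}\subseteq U_2$, this shows that the group generated by all subgroups contained in $U_2$ has closure $\overline{6\,\ZZ^{(\omega)}}\neq G$, so no family $\sH\subseteq\sP(U_2)$ can topologically generate $G$. Properness is the cleaner half: were $e_1\in\overline{6\,\ZZ^{(\omega)}}$, then for each $N\geq2$ there would be $h\in U_N$ with $h\equiv-e_1\pmod{6\,\ZZ^{(\omega)}}$; reducing modulo $6$ (legitimate since $6\mid(N+1)!$) and inspecting coordinate $1$, some $A_m$ must contribute a net $-1$ there, and forcing every higher coordinate to be $\equiv0\pmod 6$ then drags in the entire telescope $e_1-e_{1+\alpha_m},\dots$ together with its capping element of type $(2)$—that is, at least $2^{m-1}+1$ elements of $A_m$, which violates the admissible count $2^{m-N}$. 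Hence $e_1\notin\overline{6\,\ZZ^{(\omega)}}$ and the subgroup is proper.

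The main obstacle is the containment (the rigidity step): every subgroup $L\subseteq U_2$ lies in $\overline{6\,\ZZ^{(\omega)}}$. The mechanism is that if $v\in L$ then $kv\in U_2$ for all $k$, and—under the rule that each element of $A_m$ may be used at most once—no telescope in a representation of a finitely supported element may run past the top of its support without leaving an uncancellable dangling coordinate; thus a nonzero residue of $v$ modulo $6\,\ZZ^{(\omega)}$ would, for a suitable multiple $k$, demand a telescope longer than the budget $2^{m-2}$ allows, exactly as in the properness step. The delicacy, and the part needing the most care, is that $v$ carries its own finite support, so short telescopes and the type-$(2)$ ``monopole'' contributions supported within that support are admissible; one must show that these cannot conspire, simultaneously across all multiples $k$, to hold $v$ outside $\overline{6\,\ZZ^{(\omega)}}$ while keeping each $kv$ within budget. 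Once this is established, combining it with the properness step gives $\overline{\langle\bigcup\sH\rangle}\subseteq\overline{6\,\ZZ^{(\omega)}}\subsetneq G$ for every admissible $\sH$, and therefore $(G,\sT)\notin$ SSGP(1).
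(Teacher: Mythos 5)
Your SSGP$(2)$ half is correct and is essentially the paper's own argument: the telescoping identity (which the paper invokes only implicitly when it asserts that each $U_n$ generates $G$) gives that $G$ has no proper open subgroup; the quotient $G/\overline{(n+1)!\,\ZZ^{(\omega)}}$ is then abelian, torsion of bounded order, and without proper open subgroups, hence SSGP by Theorem~\ref{torsionSSGP} (your appeal to Corollary~\ref{3equivs} is the abelian form of the same fact), and $G\in{\rm SSGP}(2)$ follows. Your negative half also follows the paper's strategy --- trap every subgroup contained in a basic neighborhood inside a proper closed subgroup --- but it is here that your proposal is incomplete.

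The gap is the one you flag yourself: the ``rigidity'' claim that every subgroup $L\subseteq U_2$ lies in $\overline{6\,\ZZ^{(\omega)}}$ is never proved; you describe a mechanism and then proceed ``once this is established.'' That claim is the entire content of $(G,\sT)\notin{\rm SSGP}(1)$, since without it nothing constrains the families $\sH\subseteq\sP(U_2)$, and it is genuinely delicate rather than routine. Note first that it depends on reading Prodanov's budget as ``at most $2^{m-2}$ \emph{distinct} elements of $A_m$, each used once'' (the rule you adopt): if repeated summands from one $A_m$ were allowed, the claim and the theorem would both be false, because for $m$ with $2^{m-2}\geq 5$ every $d\in A_m$ would satisfy $\langle d\rangle\subseteq U_2$ (each residue $k\in\{0,\dots,5\}$ uses $k\leq 2^{m-2}$ copies of $d$, and $U_2$ is a union of cosets of $6\,\ZZ^{(\omega)}$), and by your own telescoping identity those cyclic subgroups generate all of $G$. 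Note second that since membership of $kv$ in $U_2$ depends only on $kv$ modulo $6\,\ZZ^{(\omega)}$, the multiples of $v$ cycle with period dividing $6$; so the issue is not asymptotic (``a suitable multiple busts the budget'') but a finite combinatorial one: one must rule out that the contributions absorbable within the support of $v$ realize the whole cyclic group generated by $v$ mod $6$ --- exactly the ``conspiracy'' you defer. A complete proof must carry out that analysis; yours does not.

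For comparison, the paper asserts the stronger, closure-free containment --- any subgroup contained in $U_n$ is a subgroup of $(n+1)!\,\ZZ^{(\omega)}$ --- as ``clear from the definition,'' and then needs no direct non-density computation at all: if SSGP held, each $(n+1)!\,\ZZ^{(\omega)}$ would be dense, hence every nonempty open set would be dense, contradicting Hausdorffness (Theorem~\ref{prodT2}). So your separate mod-$6$ cascade showing $e_1\notin\overline{6\,\ZZ^{(\omega)}}$ is avoidable work (it redoes what Theorem~\ref{prodT2} already buys), while the step you left open is the one the whole theorem rests on.
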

\begin{proof}
First we show that $(\ZZ^{(\omega)},\sT)\in{\rm SSGP}$$(2)$.  Since 
the $U_n$ form a basic set of neighborhoods of $0$, every neighborhood of 
$0$ contains a subgroup of the form $(n+1)! \ZZ^{(\omega)}$ for some $n$.
Thus each $U_n$
generates $\ZZ^{(\omega)}$.  This means that $Z^{(\omega)}$ has no
proper open subgroups, hence for fixed $n$ the group
$G_n := \ZZ^{(\omega)} / \overline{(n+1)! \ZZ^{(\omega)}}$
contains no proper open subgroup. Further,
$G_n$ is of bounded order.  Thus
$G_n\in{\rm SSGP}(1)$ by Theorem~\ref{torsionSSGP}, and therefore 
$(\ZZ^{(\omega)}, \sT)\in{\rm SSGP}(2)$.  

We show that $(\ZZ^{(\omega)}, \sT)\notin{\rm SSGP}(1)$:  
From the definition of a basic 
neighborhood $U_n$ it is clear that any subgroup of $\ZZ^{(\omega)}$ which $U_n$ 
contains must also be a subgroup of $(n+1)!\ZZ^{(\omega)}$.  Thus the condition
$(\ZZ^{(\omega)},\sT)\in{\rm SSGP}$ would imply that each subroup
$(n+1)! \ZZ^{(\omega)}$ is
dense in $ \ZZ^{(\omega)}$.
Thus every 
nonempty open set is dense in $\ZZ^{(\omega)}$, contradicting Theorem~\ref{prodT2}:
$(\ZZ^{(\omega)}, \sT)$ is a Hausdorff topological group.
\end{proof}

Another way to show that the class-theoretic inclusion
${\rm SSGP}(1)\subseteq{\rm SSGP}(2)$ is proper
is to find a topological group $G\notin{\rm SSGP}$ with
a closed normal subgroup $H$ such that $H\in{\rm SSGP}$ and
$G/H\in{\rm SSGP}$ (for the case $n =m= 1$ of
Theorem~\ref{prsrv_prop}(a) then shows $G\in$ SSGP$(2)$). 
Such examples were given in the second author's dissertation \cite{gouldphd}.  
We generalize that construction to show by induction
for arbitrary $n>1$ the existence of topological 
groups which have SSGP$(n)$ but not SSGP$(n-1)$.

The case $n=1$ is demonstrated by any of our nontrivial SSGP examples.  
However, our construction by induction for the case $n>1$ will require  
additional properties, namely, that our example groups be abelian, countable, torsionfree 
and have a group topology defined by a metric.  Let $H$ be the topological group
$\ZZ^*_A$ as in Definition \ref{deirolf-warken_sub} where $\ZZ$
has the discrete topology and $A \subseteq [0,1)$ consists of  
points of the form
$\frac{t}{2^m}$ for $m, t \in \NN$ and $0 \leq t \leq 2^m$.  
It is clear that $H$ is abelian, countable and torsionfree
and is not 
the trivial group.  $H$ also has SSGP$(1)$
(Theorem~\ref{DW_sub}).  
The topology on $H$ is the metric topology given by the
norm $\|h\| := \lambda(Supp(h))$ for $h \in H$, where 
$Supp(h)$ is the support of $h$ as a function on $[0,1)$. (Here the ``norm"
designation follows historical precedent; we use it both
out of respect and for convenience, but we do not require that 
$\|N g\| = |N| \cdot \|g\|$.)

Fix $n>1$ and suppose there is
a countable,  torsionfree abelian group $G_{n-1}$ with a metrc
$\rho$ that defines a group topology on $G_{n-1}$ such that
$G_{n-1} \in $ SSGP$(n-1)$, and
$G_{n-1} \notin $ SSGP$(n-2)$.  Now, define
(algebraically)
$G_{n}:=H \oplus G_{n-1}$; we give
$G_{n}$ a topology which is different from the 
product topology, using a technique borrowed from M.~Ajtai, I.~Havas, and J.~Koml\'{o}s 
\cite{ajtai}.  We create a metric group topology on $G_{n}$ starting with a 
function  $\nu: S \rightarrow \RR^+$, where $S$ is a specified generating set for 
$G_{n}$ which does not contain $1_{G_n}$.  We refer to $\nu$ together with the generating set $S$ as a ``provisional norm" (in 
terms of which a norm on the group will be defined).  For $g \in G_{n}$ we write 
$g = (h, g^\prime)$ with $h \in H$ and $g^\prime \in G_{n-1}$. 

Let $e_{m,t}$ be the element of $H$ which has
value $1$ on the
interval $[\frac{t-1}{2^m}, \frac{t}{2^m})$ and has value $0=0_\ZZ$ elsewhere.
Let 
$U_m := \{g^\prime \in G_{n-1}: \|g^\prime\| \le \frac{1}{2^m}\}$ for $m \in \ZZ$   
and let $g^\prime_{m,t}$ for $t<\omega$ be a list of the 
elements in $U_m$.  In addition, let $r(m,t)$ be an enumeration of the pairs $(m,t)$.  
We define the set $S \subseteq G_{n}$ to be the
set of elements $s$ in the following 
provisional norm assignments, $\nu(s)$:
\newcounter{item}
\begin{list}{(\arabic{item})}
{\usecounter{item}}
\item $\nu(\,(p \cdot e_{m,t}\,,\,0)\,) = \| p \cdot e_{m,t}\| = \frac{1}{2^m}$ \;\;\;\; for 
$m \in \NN_0, \; 0 < |p|<\omega, \; 1 \le t \le 2^{m}$ 
\item $\nu(\,(f_r\,,\,g^\prime_{m,t})\,) = \|g^\prime_{m,t}\| \le \frac{1}{2^m}$ 
\hspace*{1cm} for  $m \in \ZZ$, $ t<\omega$ \\[.2cm]
where \;\; $\displaystyle{ f_r = \sum_{i=1}^{2^{r}}e_{r,2i-1}}$ \; and \;
$r = r(m,t)$.
\end{list}
Notice that (1) gives the same provisional norm to every non-zero element in a 
subgroup of $G_n$,  whereas (2) is
for a linearly independent 
set of elements of $G_n$.\\

Now we define a seminorm $\| \cdot \|$ on $G_n$
in terms of the provisional norm $\nu$.  

\begin{definition}\label{normdef} For $g \in G_n$,
\begin{center}
$\displaystyle{\|g\| := \inf \left\{\sum_{i=1}^N |\alpha_i| \nu(s_i) : g = 
\sum_{i=1}^N \alpha_i s_i , \,\, s_i \in S , \, \alpha_i \in \ZZ, \, N<\omega\right\}.}$
\end{center}
\end{definition}
This defines a seminorm because $S$ generates $G_n$ and because the use of the 
infimum in the definition guarantees that the triangle inequality will be satisfied.  Therefore, 
the neighborhoods of $0$ defined by this seminorm will generate a
(possibly non-Hausdorff) group topology on $G_n$.  
Again, we do not require that a seminorm (or a norm) satisfy $\|mg\| = |m| \cdot \|g\|$ 
because this property is not needed in order to generate a group topology.

Now in Lemma~\ref{Gn_induct} we use the notation and definition just introduced.

\begin{lemma}\label{Gn_induct}
{\rm (a)} $G_n$ is a  torsionfree, countable abelian group;

{\rm (b)} the seminorm on $G_n$ is a norm (resulting in a Hausdorff metric); 

{\rm(c)} $G_n \in{\rm SSGP}(n)$; and
 
{\rm (d)} $G_n \notin{\rm SSGP}(n-1)$.
\end{lemma}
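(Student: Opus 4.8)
The plan is to dispose of (a) and the topological bookkeeping first, isolate one clean inequality, and then concentrate the real work into a single combinatorial \emph{Key Lemma} on which both (d) and the hard half of (b) rest. Part (a) is immediate: $G_n=H\oplus G_{n-1}$ is a direct sum of two countable torsionfree abelian groups, hence is countable, torsionfree and abelian. Write $\pi:G_n\to G_{n-1}$, $(h,g')\mapsto g'$, with kernel $H\times\{0\}$. The inequality that governs everything is $\|(h,g')\|\ge\|g'\|$ (the norm on the right being that of $G_{n-1}$): in any representation $(h,g')=\sum_i\alpha_i s_i$ the type-(1) summands project to $0$, while each type-(2) summand $(f_r,g'_{m,t})$ projects to $g'_{m,t}$ with $\nu=\|g'_{m,t}\|$, so $\sum_i|\alpha_i|\nu(s_i)\ge\|\sum_i\alpha_i\pi(s_i)\|=\|g'\|$ by the triangle inequality, and one takes the infimum. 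Conversely, for $g'\ne0$ choose $m$ with $\|g'\|\le 2^{-m}$, so that $g'=g'_{m,t}$ for some $t$ and the single generator $(f_{r(m,t)},g')$ gives $\|(f_{r(m,t)},g')\|\le\|g'\|$. Hence the quotient seminorm on $G_n/(H\times\{0\})\cong G_{n-1}$ is \emph{exactly} the original norm of $G_{n-1}$; in particular $H\times\{0\}$ is closed and $G_n/(H\times\{0\})$ is topologically isomorphic to $G_{n-1}$ with its original topology. (I read the list $g'_{m,t}$ as running over the \emph{nonzero} elements of $U_m$, as one must: a generator $(f_r,0)$ of provisional norm $0$ would otherwise force $\|f_r\|=0$.)

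For (c) I collapse the $H$-coordinate cheaply. Given a ball $U$ of radius $\epsilon$, fix $m_0$ with $2^{-m_0}<\epsilon$ and set $\sH:=\{\langle(e_{m,t},0)\rangle:m\ge m_0,\ 1\le t\le 2^m\}$. Each listed cyclic subgroup lies in $U$, because type-(1) assigns $(p\,e_{m,t},0)$ the provisional norm $2^{-m}\le 2^{-m_0}$ for every $p$. Since $e_{m,t}=e_{m+1,2t-1}+e_{m+1,2t}$, the scales $\ge m_0$ generate every $e_{m,t}$ and hence all of $H$, so $\langle\bigcup\sH\rangle=H\times\{0\}$, which is closed by the previous paragraph. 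Thus $\overline{\langle\bigcup\sH\rangle}=H\times\{0\}$ and $G_n/\overline{\langle\bigcup\sH\rangle}\cong G_{n-1}\in{\rm SSGP}(n-1)$; by Definition~\ref{defSSGPn} this is precisely the assertion $G_n\in{\rm SSGP}(n)$.

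For (d) the crux is a \emph{Key Lemma}: there is $\epsilon_0>0$ such that every subgroup $L$ contained in the $\epsilon_0$-ball $U_0$ satisfies $\pi(L)=\{0\}$, i.e. $L\subseteq H\times\{0\}$. Granting this, (d) is quick: for any $\sH\subseteq\sP(U_0)$ we get $N:=\overline{\langle\bigcup\sH\rangle}\subseteq H\times\{0\}$, so the natural map $G_n/N\twoheadrightarrow G_n/(H\times\{0\})\cong G_{n-1}$ is a continuous surjection; were $G_n/N\in{\rm SSGP}(n-2)$, Theorem~\ref{prsrv_prop}(b) would give $G_{n-1}\in{\rm SSGP}(n-2)$, against the inductive hypothesis. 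Hence no $\sH\subseteq\sP(U_0)$ witnesses ${\rm SSGP}(n-1)$ for $U_0$, and $G_n\notin{\rm SSGP}(n-1)$. To prove the Key Lemma, suppose $(h,g')\in L\subseteq U_0$ with $g'\ne0$; as $L$ is a subgroup, every $(kh,kg')$ lies in $U_0$, so $kh$ is, up to a type-(1) remainder of $H$-support $<\epsilon_0$, a flag combination $\sum_l\beta_l f_{r_l}$ whose paired $G_{n-1}$-parts sum to $kg'$ at small cost. The flags are strongly dissociated: $\|f_r\|_H=\lambda(\mathrm{Supp}\,f_r)=1/2$ for all $r$, and since the $f_r$ are Rademacher-type indicator sums at distinct dyadic scales, no small-cost flag combination can track the \emph{linear} data $kh=k\cdot h$. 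In the single-flag case, comparing $k=1,2$ forces $\|f_{r_2}-2f_{r_1}\|_H<3\epsilon_0$, impossible since $\|f_{r_2}-2f_{r_1}\|_H\ge\lambda(\mathrm{Supp}\,f_{r_1}\cup\mathrm{Supp}\,f_{r_2})\ge 1/2$ once $\epsilon_0<1/6$. Extending this estimate to \emph{arbitrary} (not single-flag) representations within the small-cost budget — the quantitative independence of $\{f_r\}$ — is the Ajtai--Havas--Koml\'{o}s mechanism borrowed in the construction, and is the main obstacle of the whole lemma.

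Finally (b) reduces to the same core. The seminorm kernel $\{g:\|g\|=0\}$ is a subgroup sitting inside $U_0$, so by the Key Lemma it lies in $H\times\{0\}$ (consistent with the inequality of the first paragraph, which already kills the $G_{n-1}$-part). What remains is faithfulness on the flag-laden copy $H\times\{0\}$: a vanishing-cost sequence of representations of a fixed $(h,0)$ would present $h$ as a limit of flag combinations of vanishing resolving cost, and the dissociation of $\{f_r\}$ forbids this unless $h=0$. Thus the single hardest ingredient, shared by (b) and (d), is the quantitative control of all integer flag combinations achievable within a small cost budget; once that independence estimate is in hand, the remaining steps are the bookkeeping indicated above.
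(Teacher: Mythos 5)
Your architecture matches the paper's quite closely: the governing inequality $\|(h,g')\|_{G_n}\ge\|g'\|_{G_{n-1}}$ together with its converse (so that $G_n/(H\times\{0\})$ carries exactly the original topology of $G_{n-1}$ and $H\times\{0\}$ is closed), the explicit family of small cyclic subgroups generating $H\times\{0\}$ for part (c), the reduction of part (d) to the claim that every subgroup contained in a sufficiently small ball lies in $H\times\{0\}$, and the final appeal to Theorem~\ref{prsrv_prop}(b) --- all of this is how the paper proceeds, and your observation that the lists $g'_{m,t}$ must omit $0$ is a legitimate and necessary repair of the construction as literally stated. But the proposal is not a proof: the one statement carrying all of the difficulty --- your ``Key Lemma,'' plus the positivity of $\|(h,0)\|$ for $h\ne 0$ needed in part (b) --- is left open. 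You verify only a ``single-flag'' special case and then state that extending the estimate to arbitrary small-cost representations ``is the main obstacle of the whole lemma.'' That obstacle \emph{is} the content of (b) and (d); deferring it to ``the Ajtai--Havas--Koml\'os mechanism'' is not an argument, since what is borrowed from that paper is a method of building norms from provisional norms, not a quotable independence estimate for these particular generators.

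The paper closes exactly this gap with a two-expansion comparison worth internalizing. Suppose $(h,g')\in L\subseteq U_\epsilon$ with $g'\ne 0$; then for every $N$ the element $(Nh,Ng')$ admits an expansion of cost $<\epsilon$. Either the type-(2) coefficients are $N\beta^{(1)}_i$ for all $N$ --- in which case the type-(2) cost is $N\sum_i|\beta^{(1)}_i|\,\|g'_i\|$, which grows without bound because some $\beta^{(1)}_i\ne0$ with $g'_i\ne 0$ --- or for some $N$ they are not. In the latter case one equates the $H$-components of the expansion of $(Nh,Ng')$ with $N$ times the expansion of $(h,g')$, obtaining a \emph{nonzero} integer combination of the flags $f_r$ expressed as an integer combination of interval functions $p\,e_{m,t}$. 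A nonzero combination of distinct flags has support of measure at least $\tfrac12$ (the finest flag present splits each constancy interval of the coarser part in half, and on each such interval at least one of the two resulting values is nonzero), whereas the type-(1) side costs at least the measure of the union of the supports it uses; splitting this cost of at least $\tfrac12$ between the two expansions forces one of them to have cost at least $\tfrac14$, a contradiction once $\epsilon<\tfrac14$. This yields your Key Lemma with the explicit constant $\epsilon_0=\tfrac14$. A separate, easier scale-counting argument (using the minimal dyadic scale $2^{-M}$ on which $h$ is constant: any expansion of $(h,0)$ involving a flag finer than that scale must also use type-(1) terms of total cost at least $\tfrac12$) gives $\|(h,0)\|>0$ for $h\ne0$, which is the other piece you left to the unproven estimate. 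Without some version of these two counting arguments, parts (b) and (d) remain unestablished.
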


\begin{proof}
(a) is clear.

(b) To show that $\|\cdot\|$ is a norm on $G_n$, we need to show that for $0\neq g \in G_n$ 
we have $\|g\| > 0$.  Let $g = (h, g^\prime)$.
If $g^\prime \ne 0$ then an 
expansion of $(h, g^\prime)$ by elements of $S$ must include elements
as in (2).  For those 
elements, we have $\nu( \, (f_r , \, g^\prime) \, ) = \|g^\prime\|$
so from the triangle inequality 
in $G_{n-1}$ we can conclude that $\|(h, g^\prime)\|_{G_n} \ge \| g^\prime \|_{G_{n-1}}$.  
On the other hand, if $g^\prime = 0$ then there is an expression for $(h, 0)$ in terms of 
elements of $S$ of type $(p \cdot e_{m,t}, \, 0)$ such that
$\sum_{i=1}^N |\alpha_i| \nu(s_i)  = \|h \| =\lambda(supp(h))$
and this value is minimal.  
If, instead, the expansion includes 
elements of type $s = (f_r\,,\, g^\prime_{m,t})$ then  there is a minimal $\nu$-value such 
a term can have.  This is because there is a minimal size,  
$\frac{1}{2^M}$, for an interval on which $h$ is constant.  An expansion of 
$(h, 0)$ by elements of $S$ that includes an element 
$s = (f_r\,,\, g^\prime_{m,t})$ such that $r(m,t) > M$ would 
also have to include $2^{r}$ elements of $S$ of the form $(e_{r, i}, \, 0)$, each with 
coefficient $-1$.  The contribution of these terms to the sum 
$\sum_{i=1}^N |\alpha_i| \nu(s_i)$ is greater than or
equal to $\frac{1}{2}$.  Such an expansion could have no effect on the infimum.  
So if $\|(h,0)\| \ne \|h\|_H$ then
$\|(h, 0)\| \ge \min( \{\|g^\prime_{m, t}\|_{G_n} : r(m,t) < M \})$.  We 
conclude that $\|(h, \, g^\prime)\|$ is bounded away from $0$
except when
$(h,g^\prime) = (0,0)$.\\

(c) We show next that $G_n \in{\rm SSGP} (n)$.  We use the fact
that $H$ has SSGP in its 
subgroup topology. (This is clear
because the provisional norm 
was defined for each element of the form $(h,0)$, assigning to each 
the norm $\|h\|$ inherited from $\ZZ^*$.) It follows that 
$\|(h,0)\| \le \|h\|$ for each $h \in H$, so any
$\epsilon$-neighborhood of $(0,0)$ contains 
a family $\sH$ of subgroups such that $\langle \sH \rangle = H$.
We will show that 
the quotient topology for $G_n/H$ coincides with the original topology for $G_{n-1}$ 
(which also implies that $\overline{H}^{G_n} = H$).
We use the
fact that the quotient map 
is open, (\cite{hri}(5.26)).  As we just pointed out, for each $g^\prime \in G_{n-1}$ 
there is an $h \in H$ such that 
$\|(h,g^\prime)\| = \| g^\prime\|_{G_{n-1}}$.  We conclude that $g^\prime$ is in the 
$\varepsilon$-neighborhood of $0 \in G_{n-1}$ if and only if there is $h \in H$ such that 
$(h,\, g^\prime)$ is in the $\varepsilon$-neighborhood of $(0,0) \in G_n$.  In other words, the 
neighborhoods of $0$ in $G_{n-1}$ coincide with the projections onto $G_n/H$ of the 
neighborhoods of $(0,0)$ in $G_n$.  Thus the topologies of $G_n/H$ and 
$G_{n-1}$ coincide.  (Note, however, that the \textit{subgroup} topology on $G_{n-1}$ does 
\textit{not} coincide with its original topology.)  Since by assumption 
$G_{n-1} \in$ SSGP$(n-1)$, 
it follows from the definition that $G_n \in$ SSGP$(n)$. \\

(d) It remains to show $G_n \notin$ SSGP$(n-1)$.
Suppose the contrary.  Then every 
$\epsilon$-neighborhood $U_\epsilon$ of $(0, 0) \in G_n$ contains a family $\s K_\epsilon$ of subgroups
such that $G_n/ \, \overline{\langle \bigcup \s K_\epsilon \rangle } \in $ 
SSGP$(n -2)$.    Let $G \in \s K_\epsilon$ and $(h, g^\prime) \in G$ with $g^\prime \ne 0$.  
We claim then $\epsilon \ge \frac{1}{4}$.  For $N<\omega$ we 
must have $\| (Nh, Ng^\prime)\| < \epsilon$.  This means that each $(Nh, Ng^\prime)$ has an 
expansion 
\begin{center}
$(Nh, Ng^\prime) = \sum^{M_N}_{i=1} \alpha^{(N)}_i (h_i,0) + 
\sum^{L_N}_{j=1} \beta^{(N)}_i (h^\prime_i, g^\prime_i)$ \;  such that\\[.2cm]
$\sum^{M_N}_{i=1} \eta(\alpha^{(N)}_i) \, \nu( (h_i,0)) \; + \; 
\sum^{L_N}_{j=1} |\beta^{(N)}_i| \, \nu( (h^\prime_i, g^\prime_i)) \;\;\;\; <  \;\;\;\; \epsilon$\\
\end{center} 
where each $(h_i,0), \; (h^\prime_i, g^\prime_i) \in S$ and where $\eta(\alpha^{(N)}_i) := 1$ 
when $\alpha^{(N)}_i \ne 0$ and $\eta(\alpha^{(N)}_i) := 0$
when $\alpha^{(N)}_i = 0$.  

We consider two cases.  

Case 1. In the  expansion above, the coefficients $\beta^{(N)}_i$ are of the form $N \beta^{(1)}_i$ 
for all $N<\omega$.  Then clearly for sufficiently large $N$ we have 
$\| (Nh, Ng^\prime)\| > \frac{1}{4}$ (or, for that matter,
$\| (Nh, Ng^\prime)\| > \epsilon$).
  
Case 2. There is some $N$ where the expansion for $(Nh, N g^\prime)$ is such that 
$\beta^{(N)}_i \ne N \beta^{(1)}_i$ for some value of $i$.  For the $H$-component of the given 
expansion of $(Nh,Ng^\prime)$, we have 
\begin{center}
$N h = \sum^{M}_{i=1} \alpha^{(N)}_i h_i + 
\sum^{L}_{i=1} \beta^{(N)}_i h^\prime_i$
\end{center}
where $M = \max(M_1, M_N)$ and $L = \max(L_1, L_N)$.
Multiplying the specified expansion 
of $(h, g^\prime)$ by the number $N$, we also have 
\begin{center}
$N h = \sum^{M}_{i=1} N \alpha^{(1)}_i h_i + 
\sum^{L}_{i=1} N \beta^{(1)}_i h^\prime_i$.
\end{center}
Equating the two expansions and re-arranging, we can write
\begin{center}
$\sum^{L}_{j=1} ( \beta^{(N)}_j - N \beta^{(1)}_j) h^\prime_j = 
\sum^M_{i = 1} (N \alpha^{(1)}_i - \alpha^{(N)}_i ) h_i$
\end{center}
where, for some index $j$, we have $(\beta^{(N)}_j - N \beta^{(1)}_j) h^\prime_j \ne 0$.  
Since the $h^\prime_j$ are linearly independent, each $h^\prime_j$
that has a nonzero  coefficient in the expression above
must be balanced by terms on the right.
This implies 
that $\sum^M_{i = 1} \eta(N \alpha^{(1)}_i - \alpha^{(N)}_i )  \ge \frac{1}{2}$,
which in turn 
means that either
$\sum^M_{i = 1} \eta( \alpha^{(1)}_i )  \ge \frac{1}{4}$ or 
$\sum^M_{i = 1} \eta( \alpha^{(N)}_i )  \ge \frac{1}{4}$.  We conclude that 
$\|(n, g^\prime)\| \ge \frac{1}{4}$ or $\|(Nn, Ng^\prime)\| \ge \frac{1}{4}$, as 
claimed. Returning to the family $\sK_\epsilon$ of subgroups,
we see that if
$\epsilon < \frac{1}{4}$ then $\sK_\epsilon \subseteq \sP(H)$ so that 
$\overline{\langle \cup \sK_\epsilon \rangle} $ is a closed subgroup of $H$.  
In such cases we have 
$G_n/ \overline{\langle \cup \sK_\epsilon \rangle} \notin$ SSGP$(n-2)$ 
because, by Theorem \ref{prsrv_prop} part (b), 
$G_n/ \overline{\langle \cup \sK_\epsilon \rangle} \in$ SSGP$(n-2)$ would imply 
that $(G_n/ \overline{\langle \cup \sK_\epsilon \rangle})/
(H/ \overline{\langle \cup \sK_\epsilon \rangle}) \in \rm{SSGP}(n-2)$ or, equivalently,
that $G_n/H \simeq G_{n-1} \in$ SSGP$(n-2)$, contrary to assumption. 
\end{proof}

We emphasize the essential content of Lemma~\ref{Gn_induct}.

\begin{theorem}\label{exists_SSGPn_not_n-1}
For $1 \le n<\omega$ there is an abelian topological group $G$ such
that $G \in$ SSGP$(n)$ 
and $G \notin$ SSGP$(n-1)$. 
\end{theorem}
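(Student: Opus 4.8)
The plan is a straightforward induction on $n$ in which the bulk of the work has already been carried out in Lemma~\ref{Gn_induct}; the present theorem simply records the conclusion and supplies the base case. The key structural point is that the four conclusions of Lemma~\ref{Gn_induct} are arranged so that the construction propagates: parts (a) and (b) guarantee that the output group $G_n$ retains precisely the standing hypotheses (countable, torsionfree, abelian, and equipped with a group topology defined by a metric/norm) required as input for the next application of the lemma, while parts (c) and (d) deliver the desired membership relations $G_n\in{\rm SSGP}(n)$ and $G_n\notin{\rm SSGP}(n-1)$.

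For the base case $n=1$ I would take $G_1:=H$, the group $\ZZ^*_A$ of Definition~\ref{deirolf-warken_sub} introduced just before Lemma~\ref{Gn_induct}. As noted there, $H$ is nontrivial, countable, torsionfree, and abelian, and its topology is given by the norm $\|h\|=\lambda(\mathrm{Supp}(h))$. By Theorem~\ref{DW_sub}(c) we have $H\in{\rm SSGP}(1)$, and since ${\rm SSGP}(0)$ consists only of the trivial group (Definition~\ref{defSSGPn}(a)) while $H$ is nontrivial, we have $H\notin{\rm SSGP}(0)$. Thus $G_1$ witnesses the theorem at $n=1$ and simultaneously satisfies every hypothesis needed to begin the induction.

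For the inductive step, fix $n>1$ and assume we have produced a countable, torsionfree abelian group $G_{n-1}$, carrying a group topology defined by a metric, with $G_{n-1}\in{\rm SSGP}(n-1)$ and $G_{n-1}\notin{\rm SSGP}(n-2)$. These are exactly the standing hypotheses under which Lemma~\ref{Gn_induct} was proved, so applying the lemma to $G_{n-1}$ yields the group $G_n:=H\oplus G_{n-1}$ carrying the norm topology of Definition~\ref{normdef}. Conclusions (a) and (b) of the lemma show that $G_n$ is again countable, torsionfree, abelian, and carries a Hausdorff metric group topology, so $G_n$ is a legitimate input for the next stage; conclusions (c) and (d) give $G_n\in{\rm SSGP}(n)$ and $G_n\notin{\rm SSGP}(n-1)$, which is the assertion of the theorem at level $n$.

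Since every difficult estimate --- in particular the verification in Lemma~\ref{Gn_induct}(d) that no $\epsilon$-neighborhood of $0$ with $\epsilon<\frac{1}{4}$ can contain a subgroup meeting $G_{n-1}$ nontrivially --- has already been absorbed into the lemma, there is no genuine obstacle remaining at this stage. The only thing the present argument must observe is the (routine) fact that the structural hypotheses (a) and (b) are self-reproducing, which is exactly what allows the induction to close; the theorem then follows at once.
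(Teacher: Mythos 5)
Your proposal is correct and is essentially the paper's own argument: the authors introduce $H=\ZZ^*_A$ as the base case (nontrivial, countable, torsionfree, abelian, normed, and SSGP$(1)$ by Theorem~\ref{DW_sub}), and then state Theorem~\ref{exists_SSGPn_not_n-1} as precisely the ``essential content'' of Lemma~\ref{Gn_induct}, whose parts (a) and (b) reproduce the standing hypotheses and whose parts (c) and (d) give the two membership claims at each stage of the induction. Nothing further is needed.
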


While Theorem~\ref{dont_admit} furnishes a vast supply of well-behaved abelian
groups which admit no SSGP$(n)$ topology,
we have found that an SSGP topology can be constructed 
for many of the standard building blocks of infinite abelian 
groups. We give now verify Theorem~\ref{admit}(c),
that is, we give a construction
of an SSGP topology for groups of the
form
$G:=\oplus_{p_i}\,\ZZ_{p_i}$ (with $(p_i)$ a squence of
distinct primes);
this illustrates the method used throughout the 
second-listed co-author's thesis \cite{gouldphd}.

Using additive notation, write $0=0_G$
and let $\{e_i:\, i=1,2,. . .\}$ be 
the canonical basis for $G$, so that $p_1e_1 = p_2 e_2 =\ldots=p_ie_i\ldots=0$.  
We define a provisional norm $\nu$, much
as in the description preceding 
Lemma \ref{Gn_induct}.  This will
generate a norm $||\cdot||$ via Definition \ref{normdef}
in such a way that in the generated topology
every neighborhood of $0$ 
contains sufficiently many
subgroups to generate a dense 
subgroup of $G$.  Suppose we can show that $G$ 
is Hausdorff and that each $U \in \sN(0)$ 
contains a family of subgroups $\s H$ such that 
$G/\overline{H}$ is torsion of bounded order, where 
$H := \langle \cup \s H \rangle$.  Then if also 
$G/\overline{H}$ has no proper open subgroup, 
we have from Theorem \ref{torsionSSGP} 
that $G/\overline{H} \in$ SSGP,
so that $G \in \rm{SSGP}(2)$.  
Our plan is to choose a norm so that $G/H$, 
and thus $G/\overline{H}$ is actually finite.  
Then if $G$ contains no proper open subgroup, 
it is necessarily the case that $\overline{H} = G$.  
Thus we attempt to define a norm $\| \cdot \|$ so that 
\begin{list}{(\arabic{item})}
{\usecounter{item}}
\item Every neighborhood of $0$ contains a set of subgroups of 
$G$ whose union generates a subgroup $H$ such that $G/H$ 
is finite.
\item $G$ has no proper open subgroups, or equivalently, every 
neighborhood of $0$ generates $G$.
\item $G$ is Hausdorff.
\end{list}
First, define $\nu(m e_n) = \frac{1}{n}$ for
every $m<\omega$ such that
$m \not \equiv 0$ mod $p_n$.  The neighborhood
of $0$ defined by $||g|| < \frac{1}{n}$ will then contain 
subgroups which generate $H := p_1p_2 . . . p_{n-1} G$.  
Then  $G/H$  is finite, as desired.

To satisfy (2) define $\widehat{e}_n := \sum_{i = 1}^n e_i$
for $n < \omega$, and define $\nu(\widehat{e}_n) := \frac{1}{n}$ 
for each $n < \omega$.  
All that then remains (the most difficult piece) is to show 
that $G$ with this topology is Hausdorff.  We will then 
have the following result.

\begin{theorem}\label{cyclic_sum_diff}
Let $G = \bigoplus_{i < \omega} \ZZ_{p_i}$ where $p_1 < p_2 < p_3 < . . . $ are primes.
Let $S = \{m e_n: n<\omega, \, 0 < m < p_n\} \bigcup \{\widehat{e}_n : n \in \NN \} $, with $e_n, \widehat{e}_n$
defined as above.
Let $\nu(m e_n) = \frac{1}{n}$ for $0 < m < p_n$, and let $\nu(\widehat{e}_n) = \frac{1}{n}$.  Then the norm
defined by
\begin{center}
$\displaystyle{||g|| = \inf \left\{\sum_{i=1}^n |\alpha_i| \nu(s_i) : g = \alpha_1 s_1 +. . .+\alpha_n s_n
, \,\, s_i \in S , \, \alpha_i \in \ZZ, \, n<\omega \right\}}$
\end{center}
generates an {\rm SSGP} topology on $G$.
\end{theorem}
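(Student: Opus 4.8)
The plan is to verify the three requirements (1)--(3) listed just above the statement and then read off the SSGP property. Granting them, the assembly is short. Given a basic neighborhood $U=\{g:\|g\|<\frac1n\}$, each cyclic subgroup $\langle e_i\rangle$ with $i>n$ lies in $U$, since every nonzero multiple satisfies $\|m e_i\|\le\nu(m e_i)=\frac1i<\frac1n$; taking $\sH:=\{\langle e_i\rangle:i>n\}$ gives $H:=\langle\bigcup\sH\rangle=\bigoplus_{i>n}\ZZ_{p_i}=p_1\cdots p_n\,G$, so that $G/H\cong\bigoplus_{i\le n}\ZZ_{p_i}$ is finite (this is (1)). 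By (2), $G$ has no proper open subgroup; since $\overline H$ is a closed subgroup of finite index it is open, whence $\overline H=G$ and $H$ is dense. Thus $\bigcup\sH$ topologically generates $G$ with $\sH\subseteq\sP(U)$, which is exactly the SSGP property. Note that one obtains SSGP$(1)$ outright, rather than merely SSGP$(2)$, precisely because $G/H$ is finite and $G$ has no proper open subgroup.

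Requirement (2) is where the auxiliary generators $\widehat e_n$ enter. Since $\|\widehat e_n\|\le\frac1n\to0$ and $\|e_i\|\le\frac1i\to0$, any open subgroup $V$ contains a ball $\{\|g\|<\delta\}$ and hence contains $e_i$ and $\widehat e_i$ for all $i\ge N_0$ (some $N_0$). Fixing $m\ge N_0$ and subtracting the available generators $e_i$ ($N_0\le i\le m$) from $\widehat e_m=e_1+\cdots+e_m\in V$ yields $s:=e_1+\cdots+e_{N_0-1}\in V$. Because the $p_i$ are distinct primes, $s$ has order $p_1\cdots p_{N_0-1}$ and is therefore a generator of the cyclic group $\bigoplus_{i<N_0}\ZZ_{p_i}$ (Chinese Remainder Theorem); hence $V\supseteq\bigoplus_{i<N_0}\ZZ_{p_i}$, while already $V\supseteq\bigoplus_{i\ge N_0}\ZZ_{p_i}$, so $V=G$.

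The real work is requirement (3), Hausdorffness, i.e.\ $\|g\|>0$ for each $g\neq0$; this is the main obstacle. Fix $g\ne0$ with support in $\{1,\dots,r\}$ and $c_r\not\equiv0\pmod{p_r}$, and consider any representation $g=\sum_i\alpha_i s_i$ of cost $C=\sum_i|\alpha_i|\nu(s_i)$. I would organize the generators by type, writing $E_j$ for the net $e_j$-contribution to coordinate $j$, $T_n$ for the net coefficient of $\widehat e_n$, and $\Sigma_j:=\sum_{n\ge j}T_n$ for the total $\widehat e$-mass reaching coordinate $j$; the coordinate equations then read $c_j\equiv E_j+\Sigma_j\pmod{p_j}$, while the cost splits as $C=\sum_j\frac1j a_j+\sum_n\frac1n b_n$, where $a_j,b_n\ge0$ count, with multiplicity $|\alpha_i|$, the $e_j$- and $\widehat e_n$-generators. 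The plan is a dichotomy. If no generator of index $>r$ is used, then in coordinate $r$ one has $c_r\equiv E_r+T_r\not\equiv0$, forcing $a_r\ge1$ or $b_r\ge1$ and hence $C\ge\frac1r$. Otherwise some $\widehat e_n$ with $n>r$ is used (let $M$ be the largest index occurring); such generators inject a nonzero value $\Sigma_j$ into coordinates $j>r$ where $g$ vanishes, and each $j\in(r,M]$ with $\Sigma_j\not\equiv0\pmod{p_j}$ forces a paid $e_j$-generator, since there $0\equiv E_j+\Sigma_j$ gives $E_j\not\equiv0$. The crux is to show this cleanup cannot be evaded cheaply: flattening $\Sigma$ back to $0$ over a stretch requires further $\widehat e_n$'s whose masses $|T_n|$ feed the $\widehat e$-cost, whereas wherever $\Sigma_j\ne0$ one automatically has $\Sigma_j\not\equiv0\pmod{p_j}$ except for the finitely many $j$ with $p_j\le|\Sigma_j|\le\sum_n|T_n|$, because the primes $p_j$ are unbounded. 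A telescoping estimate in the spirit of the proof of Theorem~\ref{prodT2} then bounds $C$ below by a positive constant depending only on $r$ (morally $C\ge\frac1r$, attained by using $e_r$ directly). I expect this telescoping/cancellation bound---controlling arbitrary combinations of the $\widehat e_n$ and quantifying how the growth of the primes defeats cheap cancellation---to be the delicate step; the two easy requirements and the final assembly are routine by comparison.
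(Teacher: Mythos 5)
Your reduction of the theorem to requirements (1)--(3), your final assembly, and your proof of requirement (2) are all correct, and they follow the same route as the paper (the paper leaves (2) nearly implicit, so your CRT argument there is a complete and welcome justification). But your treatment of requirement (3), Hausdorffness, is not a proof: it is a plan. The entire content of the theorem --- what the paper itself calls ``the most difficult piece'' --- is the uniform lower bound $\|g\|\geq \mathrm{const}(r)>0$ for $g\neq 0$ supported in $\{1,\dots,r\}$, and at exactly that point you write that ``a telescoping estimate in the spirit of the proof of Theorem~\ref{prodT2} then bounds $C$ below'' and that you ``expect this telescoping/cancellation bound \dots to be the delicate step.'' Nothing in your text establishes that bound. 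Your dichotomy settles only the trivial branch (no generator of index $>r$ used); in the substantive branch you correctly observe that each coordinate $j\in(r,M]$ with $\Sigma_j\neq 0$ and $p_j>|\Sigma_j|$ forces a paid $e_j$ at cost $\frac1j$, but these forced costs are individually small, and you never show that they, together with the $\widehat e$-costs $\sum_n|T_n|\frac1n$, sum to at least a fixed positive quantity for \emph{every} admissible mass distribution $\{T_n\}$ --- for instance masses that cancel over short stretches ($T_M=K$, $T_{M'}=-K$ with $M'$ close to $M$), or masses large enough that $p_j\le|\Sigma_j|$ on many coordinates so that no $e_j$-payment is forced there. Controlling that interaction is the crux, and it is absent.

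For comparison, the paper closes exactly this gap by induction on the largest index $N$ appearing in a formal representation $\varphi=\sum_{i=M}^{N}(a_ie_i+b_i\widehat e_i)$ of $g$: either $|b_{N-1}+b_N|\geq p_{N-1}$ (then the cost is already $\geq 1$), or $b_{N-1}+b_N=0$ (then the top terms can be deleted without changing the value, and the induction hypothesis applies), or else $\widehat e_N$ is merged into $\widehat e_{N-1}$, producing $\varphi'$ with the same value and with $\nu(\varphi')\le\nu(\varphi)$ whenever $|b_N|\le N-1$ (while $|b_N|\ge N$ again gives cost $\ge 1$). This merging step is precisely the quantitative mechanism your sketch appeals to but does not supply. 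Your framework (coordinate equations $c_j\equiv E_j+\Sigma_j \pmod{p_j}$, cost decomposition, the role of the unboundedness of the primes) is a reasonable starting point and could plausibly be completed, but until an estimate of this kind is actually proved, the proposal does not prove the theorem.
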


\begin{proof} 
As noted above, our construction for the norm $\| \cdot \|$ guarantees that every 
$\epsilon$-neighborhood $U$ of $0$ generates $G$ and also 
contains subgroups whose union generates 
an $H$ such that $G/H$ is finite.  Then, as also noted, if 
$G$ is Hausdorff  we are done.  

Suppose $0\neq g \in G$ and $n$ is the highest nonzero coordinate index for $g$.   We need to show that
$||g||$ is bounded away from $0$.  In fact, we
show that $||g|| \geq \frac{1}{n}$.  For convenience we
extend the domain of $\nu$ to all formal finite sums of elements from $S$ with coefficients from
$\ZZ$:\\  [.5cm]
\hspace*{1cm}For $\varphi = \sum_{i=M}^N  (a_i e_i +  b_i \widehat{e}_i)$, \hspace{1cm}  let
$\nu(\varphi) = \sum_{i=M}^N ( \eta_i + |b_i|) \frac{1}{i}$\\[.5cm]
where each $\eta_i$ is either $0$ or $1$, according as to whether or not 
$a_i \equiv 0$ mod $p_i$ .

In addition, we will assume
\begin{list}{(\arabic{item})}
{\usecounter{item}}
\item $g = \mathrm{val}(\varphi)$, which means that the formal sum $\varphi$ evaluates to $g \in G$;
\item each $e_i$ and each $\widehat{e}_i$ appears at most once in any formal sum; and
\item $0 \leq a_i < p_i$ for each $i$,
\end{list}
item (2) being justified by the fact that we are ultimately interested in the norm, which minimizes
$\nu(\varphi)$.

Let $\sF(M, N)$ be the set of such formal sums
where $M$ is the smallest coordinate index for a
nonzero coefficient $a_M$ or $b_M$ and
where $N$ is the largest such index.
(Here for $b_i$,
``non-zero" indicates that $b_i$ is not a multiple
of $p_1 p_2 . . . p_i$.)

We want to show that $\nu(\varphi) \geq \frac{1}{n}$ where $g = \mathrm{val}(\varphi)$, where
$\varphi = \sum_{i=M}^N  (a_i e_i +  b_i \widehat{e}_i)$ and where either $a_M$ or $b_M$ is nonzero
and either $a_N$ or $b_N$ is nonzero.  In other words, $\varphi \in \sF(M, N)$. This is clear if
$M \leq n$. 

Suppose first that $N = M = n + 1$.    Then, since the $n + 1$ component
of $g$ is $0$ we have that $a_{n + 1} + b_{n + 1} \equiv 0$ mod $p_{n + 1}$.
Both coefficients are $0$ only
if $g=0$, so either both are nonzero or else $a_{n+1} = 0$
and $b_{n+1} = m p_{n + 1}$ for some $m \neq 0$.  In the first case we have
$\nu(\varphi) \geq \frac{2}{n + 1} > \frac{1}{n}$ and in the second case we have
$\nu(\varphi) \geq \frac{p_{n+1}}{n + 1} > 1 \geq \frac{1}{n}$.

Suppose instead that $M = N > n + 1$.  In this case, we know that the $(N - 1)$ component of $g$
is $0$.  In order for that to be true when $g\neq0$
can be written as $\varphi = a_N e_N + b_N \widehat{e}_N$,
it must be the case that $b_N = m p_{N - 1}$ for some $m \neq 0$.  But then we
have $\nu(\varphi) \geq \frac{p_{N - 1}}{N} \geq 1 \geq \frac{1}{n}$.

Finally, we fix $M$ and use induction on $N$.  Assume that we have already shown that
$\nu(\varphi) \geq \frac{1}{n}$ when $\varphi \in \sF(M, Q)$ for $M \leq Q \leq N - 1$, and suppose that
$\varphi \in \sF(M, N)$.  We treat three cases separately.\\
\hspace*{1cm} (a)  Case 1. $|b_{N-1} + b_N| \geq p_{N - 1}$.  Then\\[.5cm]
\hspace*{3cm}$\nu(\varphi) \geq \frac{|b_{N-1}|}{N-1} + \frac{|b_N|}{N} \geq \frac{p_{N-1}}{N} \geq 1 \geq
\frac{1}{n}$.\\[.5cm]
\hspace*{1cm} (b) Case 2. $b_{N-1} + b_N = 0$.  Then we can delete the terms\\[.5cm]
\hspace*{3cm} $b_{N-1} \widehat{e}_{N - 1} + b_N \widehat{e}_N + a_N e_N$\\[.5cm]
without affecting the value of $\varphi$,
and our induction assumption applies.\\[.5cm]
\hspace*{1cm} (c) Case 3. $|b_{N-1} + b_N| <  p_{N - 1}$ and $b_{N-1} + b_N \neq 0$.  Then if 
we let $\varphi^\prime$ be the formal sum obtained from $\varphi$ by deleting 
$a_N e_N + b_n \widehat{e}_N$ and replacing $b_{N-1} \widehat{e}_{N-1}$ with 
$(b_{N-1} + b_N) \widehat{e}_{N-1}$, we have val$(\varphi^\prime) = $ val$(\varphi) = g$, and\\
\begin{center}
$\displaystyle{\nu(\varphi^\prime) - \nu(\varphi) = \frac{|b_{N-1} + b_N| }{N-1} -
\left (\frac{|b_{N-1}|}{N - 1} + \frac{|b_N|}{N} + \frac{1}{N} \right )
\leq \frac{|b_N|}{N(N-1)} - \frac{1}{N}}.$
\end{center}
We see that this difference is negative or zero as long as $|b_N| \leq N - 1$.  Then, since
$\varphi^{\prime} \in \sF(M, N - 1)$, our induction assumption applies.  If, on the contrary,
$|b_N| \geq N$, we already have $\nu(\varphi) \geq 1 \geq \frac{1}{n}$, and we can conclude 
that $G$ is Hausdorff, as desired.
\end{proof}

\section{Concluding Remarks}

\begin{discussion}\label{history}
{\rm
With no pretense to completeness,
we here discuss briefly some of the literature
relating to the development of the class of m.a.p. groups.

(a)~As we indicated earlier, in effect the class m.a.p.
was introduced in 1930 by
von Neumann~\cite{neumann}, who then together with Wigner~\cite{neumannwig} proved that (even in its discrete topology)
the matrix group $SL(2,\CC)$ is an m.a.p. group.

(b)~In the period 1940--1952, several workers showed that certain real topological linear spaces are m.a.p. groups; several examples, with detailed verification, are given by Hewitt and
Ross~\cite{hri}(23.32).

(c)~We have quoted above at length from the 1980 paper of Prodanov~\cite{prod80}, which showed by ``elementary means''
that the group $\bigoplus_\omega\,\ZZ$ admits an m.a.p. topology.

(d)~Ajtai, Havas and Koml\'os~\cite{ajtai}
proved that each group $G$ of the form $\ZZ$, $\ZZ(p^\infty)$,
or $\bigoplus_n\,\ZZ(p_n)$ (with all $p_n\in\PP$ either
identical or distinct) admits a m.a.p. group topology. 

(e)~Protasov~\cite{protasov} and Remus~\cite{remus88} asked whether every infinite abelian group admits an m.a.p. group topology; the question was deftly settled in the negative by Remus~\cite{remus89} with the straightforward
observation that for
distinct $p,q\in\PP$, every group topology on the
infinite group
$G:=\ZZ(p)\times(\ZZ(q))^\kappa$ (with $\kappa\geq\omega$)
has the property that the homomorphism
$x\rightarrow qx$ maps $G$ continuously onto the compact group $\ZZ(p)$.
(See \cite{comf2}(3.J), \cite{comfremusv}(4.6) for additional discussion.)

(f)~Remus~\cite{remus88} showed that every free abelian
group, also every infinite divisible abelian group, admits an m.a.p. topology.

(g) In view of the cited examples
$\ZZ(p)\times(\ZZ(q))^\kappa$ of
Remus~\cite{remus89}, it was natural for
Comfort~\cite{comf2}(3.J.1) to raise the question: Does
every abelian group which is not torsion of bounded order
admit an m.a.p. topology? What about the
countable case?

(h) Motivated by question (g), Gabriyelyan~\cite{gab1}, \cite{gab2} showed that every infinite finitely generated abelian group admits an m.a.p. topology,
indeed the witnessing topology may be chosen complete in the sense that every Cauchy net converges. Gabriyelyan~\cite{gab3} showed further that an abelian torsion group of bounded
order admits an m.a.p. topology if and only if each of its
leading Ulm-Kaplansky invariants is infinite. (The
reader unfamiliar with the Ulm-Kaplansky invariants
might consult \cite{fuchsii}(\S77); those cardinals also play a
significant role
in \cite{comfdik3} in a setting closely related to the present paper.)

(i) Complete and definitive characterizations of those
(not necessarily torsion) abelian groups $G$ which admit an m.a.p. topology
were given recently by Dikranjan and Shakhmatov~\cite{diksha14}. Among them
are these: (1)~$G$ is connected in its Zariski topology;
(2)~$m\in\ZZ\Rightarrow mG=\{0\}$ or $|mG|\geq\omega$;
(3)~the group ${\rm fin}(G)$ is trivial, i.e.,
${\rm fin}(G)=\{0\}$. (The group ${\rm fin}(G)$,
whose study
was initiated in
\cite{diksha10}(4.4) and continued in \cite{comfdik3}(\S2),
may be defined by the relation

${\rm fin}(G)=\langle\bigcup\{mG:m\in\ZZ,|mG|<\omega\}\rangle$).

Detailed subsequent analysis of the theorems and techniques of 
\cite{diksha14} have allowed those authors to answer the
following two questions in the negative; these questions were posed
in \cite{gouldphd} and in a privately circulated pre-publication
copy of the present manuscript.

       (1) {\it Let $G$ be a group with a normal subgroup $K$ for which $K$ and $G/K$ admit topologies $\sU$ and $\sV$ respectively such that $(K,\sU)\in{\rm m.a.p}$ and $(G/K,\sW)\in{\rm m.a.p.}$ Is there then necessarily a group topology $\sT$ on $G$ such that ($K$ is closed in $(G,\sT)$ and) $(K,\sU)=(K,\sT|K)$ and $(G/K,\sU)=(G/K,\sT_q)$ with $\sT_q$ the quotient topology?}
 
       (2) {\it Let $G$ be a group with a normal subgroup $K$ such that both $K$ and $G/K$ admit {\rm m.a.p.} topologies. Must $G$ admit a {\rm m.a.p.} topology?}
}
\end{discussion}

\begin{remark}
{\rm
In the dissertation \cite{gouldphd}, the second-listed co-author
found it convenient to introduce the class of {\it weak} SSGP groups
(briefly, the WSSGP groups), that is,
those topological groups $G=(G,\sT)$ which contain no proper open 
subgroup and have the property that for every $U \in \s N(1_G)$ there is a 
family of subgroups $\s H \subseteq \s P(U)$ such that 
$H = \overline{\langle \cup \s H \rangle}$ is normal in $G$ and $G/H$ is 
torsion of bounded order.  Subsequent analysis 
(as in Theorem~\ref{torsionSSGP} above) along with the
definitions of the classes SSGP$(n)$ has
revealed the class-theoretic inclusions 
SSGP$(1) \subseteq$ WSSGP $\subseteq$ SSGP$(2)$.  
A consequence of Theorem \ref{torsionSSGP} is that the 
Markov-Graev-Remus examples (as in Remark~\ref{MGR})
are not just WSSGP but are, 
in fact, $\rm{SSGP}$.
And  Prodonov's group, discussed above, belongs
to the class of SSGP$(2)$ groups (implying the m.a.p. 
property).  From these facts we conclude that the class of
WSSGP groups contributes little additional useful
information to the present inquiry, and we have chosen to 
suppress its systematic discussion in this paper.
}
\end{remark}

In Theorems~\ref{finitelycogennotSSGP} and \ref{dont_admit} we have identified
several classes of groups which do not admit an SSGP topology.
That suggests the following natural question.

\begin{question}
What are the (abelian) groups which admit an SSGP topology?
\end{question}

Our work also leaves open this intriguing question:

\begin{question}
Does every abelian group which for some $n>1$ admits an SSGP$(n)$ topology also admit an SSGP topology?
\end{question}

{\rm
There is another important and much-studied class of m.a.p. groups: those 
whose every continuous action on a compact space has 
a fixed point, the so-called f.p.c. groups.  (See, for example, 
\cite{glasner}, \cite{pestov3} and \cite{farah_solecki}.)  The study 
of f.p.c. groups, also known as \textit{extremely amenable groups}, 
led to the formulation of a difficult
long-standing open question in abelian topological 
group theory:
\begin{question}
Do the f.p.c. abelian groups constitute a \textit{proper} 
subclass of the m.a.p. abelian groups?
\end{question} 
{\rm This question was raised by 
E. Glasner in 1998 \cite{glasner}.  Even the characterization 
of abelian m.a.p. groups and abelian f.p.c. groups by 
different big set conditions (with the one characterizing 
f.p.c. groups being the stronger) did not settle the question.  
(See \cite{pestov1} and \cite{ellis-keynes} for details.)  Unfortunately, 
the SSGP property has so far not shed light on this question, either.  
It is known \cite{glasner} that there are f.p.c. topologies 
for $\ZZ$, so f.p.c. groups need not have SSGP.}

\end{document}